\theoremstyle{plain}
\newtheorem{theorem}{Theorem}[section]
\newtheorem{lemma}[theorem]{Lemma}
\newtheorem{proposition}[theorem]{Proposition} 
\newtheorem{corollary}[theorem]{Corollary}
\theoremstyle{definition} 
\newtheorem{remark}[theorem]{Remark}
\begin{document}
%
%
%
%
%
%
%

\title{Shape sensitivity analysis of the heat equation and  the Dirichlet-to-Neumann map}


\author{Matteo Dalla Riva\thanks{{Dipartimento di Tecnica e Gestione dei Sistemi Industriali, Universit\`a degli Studi di Padova, Stradella S. Nicola 3, 36100 Vicenza,  Italy. E-mail: {matteo.dallariva@unipd.it}}},
        Paolo Luzzini\thanks{Dipartimento di Scienze e Innovazione Tecnologica, Universit\`a degli Studi del Piemonte Orientale ``Amedeo Avogadro'', Viale Teresa Michel 11, 15121 Alessandria, Italy. E-mail: {paolo.luzzini@uniupo.it}}, 
        Paolo Musolino\thanks{Dipartimento di Matematica ``Tullio Levi-Civita'', Universit\`a degli Studi di Padova, Via Trieste 63, 35121 Padova, Italy. E-mail: {paolo.musolino@unipd.it}}}


\date{June 25, 2025}

\maketitle

\noindent
{\bf Abstract:} 
We study a Dirichlet problem for the heat equation in a domain containing an interior hole. The domain has a fixed outer boundary and a variable inner boundary determined by a diffeomorphism $\phi$. We analyze the maps that assign to the infinite-dimensional shape parameter $\phi$ the corresponding solution and its normal derivative, and we prove that both are smooth. Motivated by an application to an inverse problem, we then compute the differential with respect to $\phi$ of the normal derivative of the solution on the exterior boundary.

\noindent
{\bf Keywords:}  heat equation, shape perturbation, layer potentials, Dirichlet problem, shape sensitivity analysis.

\noindent   
{{\bf 2020 Mathematics Subject Classification:}}  35K20, 31B10, 47H30,  45A05.

\section{Introduction}

In this paper, we analyze the sensitivity of the solution to a Dirichlet problem for the heat equation with respect to perturbations of the spatial domain. 

Specifically, we consider a Dirichlet problem for the heat equation in a perturbed annular domain, with a fixed exterior boundary $\partial \Omega^o$ and an interior moving boundary $\phi(\partial \Omega^i)$, where $\phi$ is a suitable diffeomorphism, treated as a parameter of the problem.

First, we prove that the solution depends smoothly on the parameter $\phi$ and on the boundary data. Then, we compute the shape differential (i.e., the differential with respect to $\phi$) of the normal derivative of the solution.

This topic falls under the mathematical field known as {\em shape optimization}. For an in-depth treatment, we refer the reader to the monographs by Soko\l owski and Zol\'esio \cite{SoZo92}, Novotny and Soko\l owski \cite{NoSo13}, and Henrot and Pierre \cite{HePi18}, among others. The subject is motivated by practical applications, particularly in the design of objects optimized for specific functional purposes. Additional applications arise in inverse problems, where the goal is to infer properties of an object---such as its shape---from indirect measurements of its behavior, for example, from wave scattering.

In both shape optimization and inverse problems, it is crucial to understand the regularity of the map that associates the domain's shape with the corresponding solution. Different levels of regularity have different implications: for example, differentiability enables the use of differential calculus to characterize optimal configurations, while smoothness allows the approximation of the solution by Taylor polynomials to any desired degree of accuracy.

The explicit formula we derive in the second part of the paper for the shape differential of the Neumann trace of the solution can be compared with a result by Chapko, Kress, and Yoon \cite{ChKrYo98}, where a similar expression was used in the inverse problem of reconstructing the interior boundary curve of an annulus of arbitrary shape from overdetermined Cauchy data on the exterior boundary. Our work improves upon that of Chapko, Kress, and Yoon \cite{ChKrYo98} in several ways: first, we establish $C^\infty$-smoothness rather than mere differentiability; second, our approach extends to dimensions higher than two; and third, we consider domains of class $C^{1,\alpha}$ instead of restricting to $C^2$ domains, and we are also able to handle non-homogeneous boundary data on the boundary of the hole.

The analysis in the present paper is based on the Functional Analytic Approach introduced by Lanza de Cristoforis (see, e.g., \cite{DaLaMu21} and references therein)  and as such, it relies heavily on potential-theoretic methods.  In particular, as in \cite{DaLuMoMu25}, we exploit the results of \cite{DaLu23} concerning the smooth dependence of the heat single layer potential on the shape of its integration support. In \cite{DaLuMoMu25}, the goal was to prove smoothness of the solution of a nonlinear mixed problem for the heat equation upon perturbation of the domain. Here, instead, we consider a Dirichlet problem for the heat equation and our aim is to analyze the  maps that assign to the infinite-dimensional shape parameter  the corresponding solution and its normal derivative and to compute the differential with respect to domain perturbation of the normal derivative.

When using potential theory to study shape perturbations, a preliminary step consists in analysing the shape sensitivity of the layer potentials. For example, Potthast \cite{Po94,Po96a} proved that the layer potentials for the Helmholtz equation are Fr\'echet differentiable with respect to the shape of the integration support. Similar results have been obtained for a variety of equations, including the Stokes system of fluid dynamics and the Lam\'e equations of elasticity. We refer, for instance, to the works of Charalambopoulos \cite{Ch95}, Costabel and Le Lou\"er \cite{CoLe12a},  Haddar and Kress \cite{HaKr04}, Hettlich \cite{He95}, and Kirsch \cite{Ki93}.

Fewer results deal with regularities beyond differentiability. An example are  the works of Lanza de Cristoforis and his collaborators: analyticity results {for harmonic layer potentials}  can be found in  \cite{LaRo04} and the extension to more general elliptic  operators in \cite{DaLa10}; smoothness results for the heat layer potentials are contained in \cite{DaLu23}  and, for the periodic case, in \cite{DaLuMoMu24}.

Another notable example consists in those paper dealing with ``shape holomorphy'':  as an example, we mention   Henr\'iquez and Schwab in \cite{HeSc21} on the Calder\'on projector for the Laplacian in $\mathbb{R}^2$, Pinto, Henr\'iquez, and Jerez-Hanckes \cite{PiHeJe24} and 
D\"olz and Henr\'iquez \cite{DoHe24} on boundary integral operators.

In contrast with the vast literature on elliptic problems, significantly less is available concerning shape sensitivity for parabolic equations. In addition to the previously mentioned works \cite{DaLu23,DaLuMoMu24, DaLuMoMu25}, the contributions by  Chapko, Kress, and Yoon \cite{ChKrYo98,ChKrYo99}, as well as Hettlich and Rungell \cite{HeRu01}, establish Fr\'echet differentiability of the solution with respect to domain perturbations and explore applications to inverse problems in heat conduction.

We now introduce  our specific boundary value problem. To do so, we fix $\alpha \in \mathopen]0,1[$ and  a natural number
\[
n \in \mathbb{N} \setminus \{0, 1\}\, .
\]
 Then we take two sets $\Omega^{o}$ and $\Omega^{i}$  satisfying the following condition:
\begin{equation}\label{introsetconditions}
	\begin{split}
		&\mbox{$\Omega^{o}$, $\Omega^{i}$ are bounded open connected subsets of $\mathbb{R}^n$ of class $C^{1,\alpha}$,} 
		\\
		&\mbox{with connected exteriors  $(\Omega^{o})^{-} : = \mathbb{R}^n\setminus \overline{\Omega^{o}}$ and $(\Omega^{i})^- : =\mathbb{R}^n\setminus \overline{\Omega^{i}}$,}
		\\
		 &\mbox{and such that $\overline{\Omega^{i}}\subseteq\Omega^{o}$}.
	\end{split}
\end{equation}
Here above, the symbol $\overline{\cdot}$ denotes the closure of a set. For the definition of Schauder spaces and domains of class $C^{1,\alpha}$, we refer to Gilbarg and
Trudinger \cite[pp.~52, 95]{GiTr83}. {Also, as is done in \eqref{introsetconditions}, for an open set $\tilde{\Omega}$ in $\mathbb{R}^n$, we use the notation $\tilde{\Omega}^-$ for its exterior $\mathbb{R}^n \setminus \overline{\tilde{\Omega}}$.}

We will consider our boundary value problem on the product of a bounded interval $[0,T]$ with a perforated domain obtained by removing from $\Omega^{o}$ a perturbed copy of the set $\Omega^{i}$: we do so by perturbing $\Omega^{i}$ with a diffeomorphism $\phi$ belonging to the class 
\[
\mathcal{A}_{\partial\Omega^{i}} := \left\{ \phi \in  C^{1,\alpha}(\partial\Omega^{i}, \mathbb{R}^n): \, \phi \text{ injective}, \, d\phi(y) \text{ injective for all } y \in \partial\Omega^{i} \right\}\,.
\]
The Jordan-Leray Separation Theorem (cf.~Deimling \cite[Thm.  5.2, p. 26]{De85}) ensures that $\phi(\partial\Omega^{i})$ splits $\mathbb{R}^n$ into exactly two open connected components, one bounded and the other unbounded. We denote by 
\[
\Omega^{i}[\phi]
\] 
the bounded open
connected component of $\mathbb{R}^n \setminus \phi(\partial\Omega^{i})$. Clearly, we  have
\[
\partial \Omega^{i}[\phi]=\phi(\partial\Omega^{i})\,.
\]
Since we need that $\overline{\Omega^{i}[\phi]}$ is contained in $\Omega^{o}$ in order to represent a hole, we introduce the set
\[
    \mathcal{A}^{\Omega^{o}}_{\partial\Omega^{i}} := \left\{ \phi \in \mathcal{A}_{\partial\Omega^{i}} : \overline{\Omega^{i}[\phi]} \subseteq \Omega^{o}  \right\}\,.
\]
If $\phi\in \mathcal{A}^{\Omega^{o}}_{\partial\Omega^{i}}$, then we can consider the perturbed perforated domain $\Omega^{o}\setminus\overline{\Omega^{i}[\phi]}$ {(see Figure \ref{fig:1}).} 
\begin{figure}[!htb]
\centering
\includegraphics[width=4.2in]{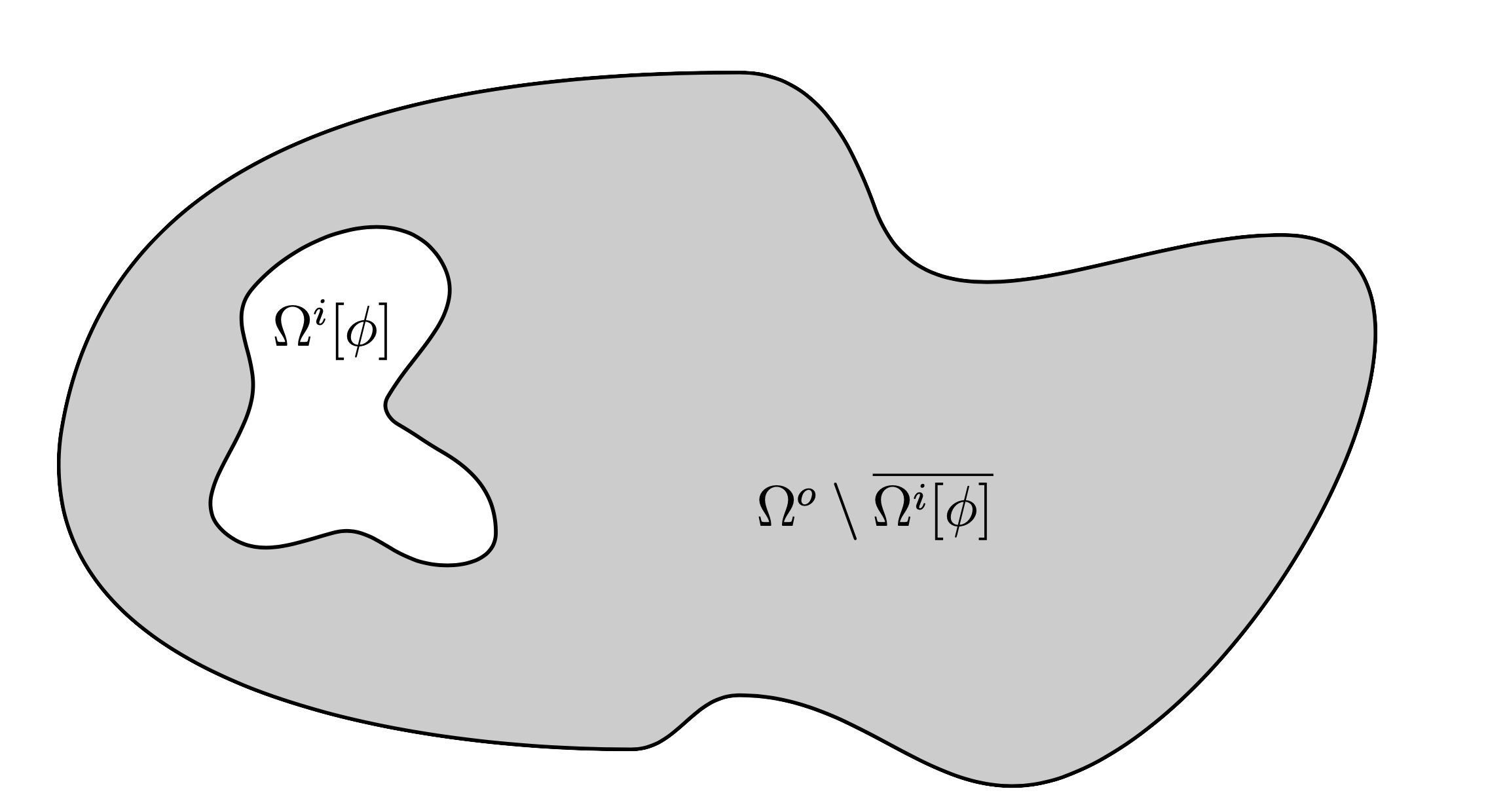}
\caption{{\it The sets $\Omega^{o}\setminus\overline{\Omega^{i}[\phi]}$ and $\Omega^{i}[\phi]$.}}\label{fig:1} 
\end{figure}

We observe that $\partial(\Omega^{o}\setminus \overline{\Omega^{i}[\phi]})$ has two connected components:  $\partial\Omega^{o}$ that remains fixed, and  $ \partial \Omega^{i}[\phi]=\phi(\partial \Omega^{i})$ that depends on $\phi$.  As a consequence, to define the Dirichlet boundary condition on ${[0,T]\times}\partial(\Omega^{o}\setminus \overline{\Omega^{i}[\phi]})$ we will use a pair of functions:
{\begin{equation}\label{introfunconditions}
(g^o,g^i) \in C_{0}^{\frac{1+\alpha}{2}; 1+\alpha}([0,T] \times \partial \Omega^{o}) \times C_{0}^{\frac{1+\alpha}{2}; 1+\alpha}([0,T] \times \partial \Omega^{i})\, .
\end{equation}}
{We refer to  Section \ref{s:prel} for the definition of parabolic Schauder} spaces.

Then we consider the following Dirichlet  boundary value problem for a function $u \in C_{0}^{\frac{1+\alpha}{2}; 1+\alpha}([0,T] \times (\overline{\Omega^{o}} \setminus \Omega^{i}[\phi]))$:

{\begin{equation}\label{princeqpertu}
\begin{cases}
    \partial_t u - \Delta u = 0 & \quad\text{in } ]0,T] \times (\Omega^{o} \setminus \overline{\Omega^{i}[\phi]}), 
    \\
u(t,x) = g^o (t,x)& \quad \forall (t,x)\in [0,T] \times \partial \Omega^{o}, 
    \\
    u (t,x) = g^i(t,\phi^{(-1)}(x)) & \quad \forall (t,x)\in  [0,T] \times \partial \Omega^{i}[\phi],
    \\
    u(0,\cdot)=0 & \quad \text{in } \overline{\Omega^{o}} \setminus \Omega^{i}[\phi]\, .
    \end{cases}
\end{equation}}
As it is well known, for each triple
\[
(\phi,g^o, g^i ) \in \mathcal{A}^{\Omega^{o}}_{\partial\Omega^{i}}\times   C_0^{\frac{1+\alpha}{2};  1+\alpha}([0,T] \times \partial\Omega^{o}) \times C_0^{\frac{1+\alpha}{2};1+  \alpha}([0,T] \times \partial\Omega^{i})
\]
problem \eqref{princeqpertu} has a unique solution in $C_{0}^{\frac{1+\alpha}{2}; 1+\alpha}([0,T] \times (\overline{\Omega^{o}} \setminus \Omega^{i}[\phi]))$, and we denote such a solution by $u_{\phi,g^o, g^i}$. Our aim is to study the dependence of the solution $u_{\phi, g^o, g^i}$ (and associated quantities) on the triple $(\phi, g^o, g^i)$. 

{ We note, however, that for fixed $\phi$, the map $(g^o, g^i) \mapsto u_{\phi, g^o, g^i}$ is linear. As a consequence, continuity---and indeed analyticity---with respect to $(g^o, g^i)$ follows straightforwardly. The main difficulty therefore lies in analyzing the dependence on the parameter $\phi$, which determines the shape of the interior boundary $\phi(\partial \Omega^i)$ of the perforated domain $\Omega^o \setminus \overline{\Omega^i[\phi]}$.}

By exploiting a formulation of problem \eqref{princeqpertu} in terms of boundary integral equations and using the Functional Analytic Approach developed by Lanza de Cristoforis, we prove in Theorem \ref{thm:smoothrep} that a suitable restriction of the solution $u_{\phi, g^o, g^i}$ is of class $C^\infty$ with respect to the triple $(\phi, g^o, g^i)$. In Theorem \ref{thm:smoothrepbis}, we further improve upon the result of Chapko, Kress, and Yoon \cite[Thm.~2.1]{ChKrYo98}---which established the existence of the domain derivative for specific perturbations---by demonstrating $C^\infty$-smoothness.

In other words, we consider a Dirichlet-to-Neumann-type map $\Lambda$ defined by
\[
(\phi,g^o, g^i)\mapsto \Lambda[\phi,g^o, g^i]:=\frac{\partial}{\partial \nu_{\Omega^o}}u_{\phi,g^o, g^i}\, ,
\]
and we prove that it is a smooth function of the variable $(\phi,g^o, g^i)$. Then we focus on the dependence upon $\phi$ with the aim of computing differentials. For this purpose, we consider the partial differential of $\Lambda[\phi,g^o, g^i]$ with respect to the shape parameter $\phi$. Our main result is represented by Theorem \ref{thm:diffnormal}, where we compute the  differential of the normal derivative of the solution with respect to the infinite dimensional shape parameter $\phi$.

 The paper is organized as follows. Section \ref{s:prel} is a section of preliminaries, mainly concerning potential theory for the heat equation. In Section \ref{sec:pert} we analyze the perturbed problem \eqref{princeqpertu} and we prove our smoothness result of the solution upon the domain perturbation and the data (see Theorem \ref{thm:smoothrep}). In Section \ref{s:inv}, we improve the result by  Chapko, Kress, and Yoon \cite[Thm.~2.1]{ChKrYo98}, where they prove the existence of a domain derivative for specific perturbations (see Theorem \ref{thm:smoothrepbis}). Finally, in Section \ref{s:diff} we consider the computation of shape differentials and in Theorem \ref{thm:diffnormal} we compute the shape differential of the normal derivative of the solution and in Section \ref{ss:comparison} we compare our result with that of Chapko, Kress{,} and Yoon  \cite{ChKrYo98}.

\section{Preliminaries}\label{s:prel}

In this section, we recall some  basic notions{---mainly on potential theory---}for the heat equation. Indeed, our plan is to transform problem \eqref{princeqpertu} into a system of integral equations by exploiting the single layer potential for the heat equation.

We recall the definition of parabolic Schauder spaces. If $\alpha \in \mathopen]0,1[$, $T>0$ and {$\Omega$ is an open subset of $\mathbb{R}^n$}, then $C^{\frac{\alpha}{2};\alpha}([0, T] \times \overline{\Omega})$ denotes the space of
bounded continuous functions $u$ from $[0, T] \times \overline\Omega$ to $\mathbb{R}$ such that
\begin{align*}
    \|u\|_{C^{\frac{\alpha}{2};\alpha}([0, T] \times \overline{\Omega})} :=& \sup_{[0, T] \times \overline\Omega} |u| + \sup_{\substack{t_1,t_2 \in [0,T] \\ t_1 \neq t_2}} \,\sup_{x \in \overline\Omega} \frac{|u(t_1,x) - u(t_2,x)|}{|t_1-t_2|^\frac{\alpha}{2}}
    \\
    & + \sup_{t \in [0,T]} \,\sup_{\substack{x_1,x_2 \in \overline\Omega \\ x_1 \neq x_2}} \frac{|u(t,x_1) - u(t,x_2)|}{|x_1-x_2|^\alpha} < +\infty.
\end{align*}
Similarly, $C^{\frac{1+\alpha}{2}; 1+\alpha}([0, T] \times \overline{\Omega})$ denotes the space of
bounded continuous functions $u$ from $[0, T] \times \overline{\Omega}$ to $\mathbb{R}$ which are continuously differentiable
with respect to the space variables and such that
\begin{align*}
    \|u\|_{C^{\frac{1+\alpha}{2}; 1+\alpha}([0, T] \times \overline{\Omega})} :=& \sup_{[0, T] \times \overline{\Omega}} |u| + \sum_{i=1}^{n} \|\partial_{x_i} u\|_{C^{\frac{\alpha}{2}; \alpha}([0, T] \times \overline{\Omega})} 
    \\
    & + \sup_{\substack{t_1,t_2 \in [0,T] \\ t_1 \neq t_2}} \,\sup_{x \in \overline{\Omega}} \frac{|u(t_1,x) - u(t_2,x)|}{|t_1-t_2|^{\frac{1+\alpha}{2}}} < +\infty.
\end{align*}
If $\Omega$ is an open subset of $\mathbb{R}^n$ of class $C^{1,\alpha}$, we use the local {parametrization} of $\partial \Omega$ to define the space
$C^{\frac{1+\alpha}{2}; 1+\alpha}([0, T] \times \partial \Omega)$ in the natural way. Similarly, we define the spaces $C^{j,\alpha}(\mathcal{M})$ and $C^{\frac{j+\alpha}{2}; j+\alpha}([0, T] \times \mathcal{M})$, $j \in \{0,1\}$ on a manifold $\mathcal{M}$ of class $C^{j,\alpha}$ imbedded in $\mathbb{R}^n$ (see \cite[Appendix A]{DaLu23}).

The subscript $0$ is used to denote a subspace consisting of functions that are
zero at $t = 0$. For example,
\[
{C_0^{\frac{\alpha}{2};\alpha}([0, T] \times \overline{\Omega})}:= 
\Big\{ u \in {C^{\frac{\alpha}{2};\alpha}([0, T] \times \overline{\Omega})} \,:\, u(0,x) = 0 \quad \forall x \in \Omega\Big\}.
\]
Then $C_0^{\frac{j+\alpha}{2}; j+\alpha}([0, T] \times \overline{\Omega})$, $C_0^{\frac{j+\alpha}{2}; j+\alpha}([0, T] \times \partial \Omega)$ and  $C_0^{\frac{j+\alpha}{2}; j+\alpha}([0, T] \times \mathcal{M})$ {with $j \in \{0,1\}$}
are similarly defined.

For functions in parabolic Schauder spaces, the partial derivative $D_x$ with respect to the space variable $x$ will be denoted by the gradient $\nabla$, while we will maintain the notation $\partial_t$ for the derivative with respect to the time variable $t$. 

For a comprehensive introduction to parabolic Schauder spaces we refer the reader to
classical monographs on the field, for example Lady\v{z}enskaja, Solonnikov, and Ural'ceva
\cite[Chapter 1]{LaSoUr68} (see also \cite{LaLu17,LaLu19}).

We recall now some well-known results on the {single layer potential} for the heat equation (for proofs
and detailed references, see Lady\v{z}enskaja, Solonnikov, and Ural'ceva
\cite[Chapter 4]{LaSoUr68}). 

To define the single  layer potential we need to introduce the function $S_{n} : \mathbb{R}^{1+n} \setminus
\{(0,0)\}\to \mathbb{R}$  by
\[
S_{n}(t,x):=
\left\{
\begin{array}{ll}
\frac{1}{(4\pi t)^{\frac{n}{2}} }e^{-\frac{|x|^{2}}{4t}}&{\mathrm{if}}\ (t,x)\in \mathopen]0,+\infty[ \times{\mathbb{R}}^{n}\,, 
\\
0 &{\mathrm{if}}\ (t,x)\in (\mathopen]-\infty,0]\times{\mathbb{R}}^{n})\setminus\{(0,0)\}.
\end{array}
\right.
\]
It is well known that $S_n$ is a fundamental solution of the heat operator $\partial_t-\Delta$ in $\mathbb{R}^{1+n} \setminus \{(0,0)\}$.

Let $\alpha \in \mathopen]0,1[$ and $T>0$. Now let  $\Omega$  be an open bounded subset of $\mathbb{R}^n$ of class $C^{1,\alpha}$. For a density $\mu \in L^\infty\big([0,T] \times \partial\Omega\big)$, we define the single layer heat potential  as
\begin{equation*} 
    v_{\Omega} [\mu](t,x) := \int_{0}^{t} \int_{\partial \Omega} S_{n}(t-\tau,x-y) \mu(\tau, y)\,d\sigma_y d\tau \quad \forall\,(t,x) \in [0, T] \times \mathbb{R}^n\, ,
\end{equation*}
and we set
\begin{equation*}
    V_{\partial\Omega}[\mu] := v_{\Omega}[\mu]_{|[0,T]\times \partial\Omega} 
\end{equation*} 
for denoting the trace of the single layer heat potential on $[0,T]\times \partial\Omega$. In order to describe the normal derivative of the single layer potential, we also define
\begin{equation*}
\begin{split}
    W^*_{\partial \Omega}[\mu](t,x) := \int_{0}^t\int_{\partial\Omega} 
    \frac{\partial}{\partial \nu_\Omega(x)} S_{n}(t-\tau,x-y) \mu(\tau,y)\,d\sigma_yd\tau 
     \qquad \forall\,(t,x) \in [0,T] \times \partial\Omega\,,
\end{split}
\end{equation*}
{where $\nu_\Omega$ denotes to outer unit normal to $\Omega$.}
%
%
We collect some properties of the single layer potential in the following theorem (for a proof and detailed references see \cite[Thm.~1]{DaLuMoMu25}). Here, we just mention that it follows from results from \cite{DaLu23,LaLu17,LaLu19} and from  Baderko \cite[Thm.~3.4]{Ba97}, Brown \cite{Br89,Br90}, and Costabel \cite{Co90}. 

\begin{theorem}\label{thmsl}
Let $\alpha \in \mathopen]0,1[$ and $T>0$. Let $\Omega$ be a bounded open subset of $\mathbb{R}^n$ of class $C^{1,\alpha}$. Then the following statements hold.
\begin{itemize}

\item[(i)] Let $\mu \in L^\infty([0,T] \times \partial\Omega)$. Then the function $v_{\Omega}[\mu]$ is continuous and 
$v_{\Omega}[\mu] \in C^\infty(]0,T[ \times (\mathbb{R}^n \setminus \partial\Omega))$. 
Moreover $v_{\Omega}[\mu]$ solves the heat equation 
in $]0,T]\times (\mathbb{R}^n \setminus \partial\Omega)$.

\item[(ii)] Let $v_{\Omega}^{+}[\mu]$ and $v_{\Omega}^-[\mu]$ denote 
the restrictions of $v_\Omega[\mu]$ to $[0,T] \times \overline{\Omega}$ and to $[0,T]\times \overline{\Omega^{-}}$, respectively. Then, the map from  $C_0^{\frac{\alpha}{2};  \alpha}([0,T] \times \partial\Omega)$ to  $C_{0}^{\frac{1+\alpha}{2}; 1+\alpha}([0,T] \times \overline{\Omega})$ that takes $\mu$ to $v_{\Omega}^+[\mu]$ is linear and continuous. If $R>0$ is such that $\overline{\Omega}$ is contained in the ball $B(0,R)$ of center $0$ and radius $R$, then the map from  $C_0^{\frac{\alpha}{2};  \alpha}([0,T] \times \partial\Omega)$ to  $C_{0}^{\frac{1+\alpha}{2}; 1+\alpha}([0,T] \times (\overline{B(0,R)}\setminus \Omega))$ that takes $\mu$ to $v_{\Omega}^-[\mu]_{|[0,T] \times (\overline{B(0,R)}\setminus \Omega)}$ is also linear and continuous.

\item[(iii)] Let $\mu \in C_0^{\frac{\alpha}{2};  \alpha}([0,T] \times \partial\Omega)$. Then the following jump relations hold: 
\begin{align*}
&\frac{\partial}{\partial \nu_\Omega}v_{\Omega}^\pm[\mu](t,x)  = \pm \frac{1}{2}\mu(t,x)
 +W_{\partial\Omega}^*[\mu](t,x),\\
 &\nabla v_{\Omega}^\pm[\mu](t,x)  = \pm \frac{1}{2}\mu(t,x)\nu_\Omega (x)
 + \int_{0}^t\int_{\partial\Omega} 
    \nabla S_{n}(t-\tau,x-y) \mu(\tau,y)\,d\sigma_yd\tau,
 \end{align*}
 for all $(t,x) \in [0,T] \times \partial\Omega$.
\item[(iv)] The operator $V_{\partial \Omega}$ is an isomorphism from the space $C_0^{\frac{\alpha}{2}; \alpha}([0,T] \times \partial\Omega)$ to $C_0^{\frac{1+\alpha}{2}; 1+\alpha}([0,T] \times \partial\Omega)$.
\item[(v)] The operator $W_{\partial \Omega}^*$ is  compact from $C_0^{\frac{\alpha}{2};  \alpha}([0,T] \times \partial\Omega)$ to itself.
\end{itemize}
\end{theorem}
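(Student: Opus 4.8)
The plan is to treat the five statements in the order in which they are listed, organizing everything around the Gauss--Weierstrass kernel $S_n$ and exploiting the parabolic scaling under which time behaves like the square of the spatial variable. For statement (i), I would first fix $(t,x) \in \mathopen]0,T[ \times (\mathbb{R}^n \setminus \partial\Omega)$ and observe that, since $x \notin \partial\Omega$, the integration set $\{(\tau,y) : \tau \in [0,t],\ y \in \partial\Omega\}$ stays away from the singularity of $S_n(t-\tau,x-y)$; one may then differentiate under the integral sign arbitrarily many times, and because $S_n$ solves the heat equation off the origin, this yields $v_\Omega[\mu] \in C^\infty$ there together with $(\partial_t - \Delta)v_\Omega[\mu] = 0$. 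Global continuity, including across $\partial\Omega$, would follow from the weak (integrable) singularity of the kernel after the time convolution, via a dominated-convergence argument exploiting $\mu \in L^\infty$.

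The technical core is statements (ii) and (iii). For (ii) I would establish parabolic Schauder estimates showing that $v_\Omega^+[\mu]$ and the exterior restriction gain one spatial derivative relative to the density, i.e.\ that they map $C_0^{\frac{\alpha}{2};\alpha}([0,T]\times\partial\Omega)$ continuously into $C_0^{\frac{1+\alpha}{2};1+\alpha}$. This rests on splitting the kernel into a near-diagonal part and a smooth remainder and on controlling the H\"older seminorm in $x$ together with the anisotropic $\frac{1+\alpha}{2}$-seminorm in $t$ by means of a change of variables adapted to parabolic scaling; the $C^{1,\alpha}$ regularity of $\partial\Omega$ enters through the local parametrizations used to flatten the boundary, while the vanishing at $t=0$ is inherited from the Duhamel-type time convolution and the condition $\mu(0,\cdot)=0$. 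For the jump relations in (iii), I would compute $\nabla v_\Omega^\pm[\mu]$ and isolate the contribution of the kernel $\partial_{\nu_\Omega(x)} S_n(t-\tau,x-y)$ as $x$ approaches $\partial\Omega$ from either side: the characteristic $\pm\frac{1}{2}\mu$ jump arises from the approximate-identity behavior of the normal-derivative kernel in the limit, exactly as in the elliptic theory but with the parabolic normalization, while the remaining principal value defines $W^*_{\partial\Omega}$.

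For (iv), boundedness of $V_{\partial\Omega}$ into $C_0^{\frac{1+\alpha}{2};1+\alpha}$ is already contained in (ii) restricted to the boundary, so it remains to upgrade this to an isomorphism. I would argue injectivity through uniqueness: if $V_{\partial\Omega}[\mu]=0$, then $v_\Omega[\mu]$ solves the homogeneous Dirichlet problem in both the interior and the exterior, hence vanishes by uniqueness for the heat equation, and the normal-derivative jump from (iii) forces $\mu=0$. Surjectivity I would obtain from the solvability of the single-layer integral equation for the Dirichlet problem, invoking the invertibility results of Brown \cite{Br89,Br90}, Costabel \cite{Co90}, and Baderko \cite{Ba97} on $C^{1,\alpha}$ cylinders. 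Statement (v) would follow from the weak singularity of the kernel of $W^*_{\partial\Omega}$ on a $C^{1,\alpha}$ boundary: the boundary regularity improves the order of the singularity of $\partial_{\nu_\Omega(x)} S_n$ enough that the operator maps bounded subsets of $C_0^{\frac{\alpha}{2};\alpha}$ into an equibounded, equicontinuous family, whence compactness via an Arzel\`a--Ascoli argument (or by truncating the kernel and approximating in operator norm by finite-rank operators).

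The hardest part is the parabolic Schauder estimate underlying (ii) and (iii). Unlike the elliptic case, the kernel is anisotropic, so one must track the coupled H\"older regularity in space (order $1+\alpha$) and in time (order $\frac{1+\alpha}{2}$) simultaneously, and the limited $C^{1,\alpha}$ smoothness of $\partial\Omega$ leaves no extra room. Since these estimates are precisely what is established in \cite{DaLu23} and in the classical parabolic potential theory of \cite{LaSoUr68} together with \cite{LaLu17,LaLu19,Ba97,Br89,Br90,Co90}, the cleanest route is to assemble the statement from those references, as is done in \cite[Thm.~1]{DaLuMoMu25}, rather than to reprove the estimates from scratch.
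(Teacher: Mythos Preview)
Your proposal is correct and aligns with the paper's treatment: the paper does not prove Theorem~\ref{thmsl} but instead defers to \cite[Thm.~1]{DaLuMoMu25} and the references \cite{DaLu23,LaLu17,LaLu19,Ba97,Br89,Br90,Co90}, which is precisely the conclusion you reach after sketching the underlying ideas. Your outline of the mechanisms behind (i)--(v) is accurate and, if anything, more informative than the paper's bare citation.
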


\section{The perturbed Dirichlet problem (\ref{princeqpertu})}\label{sec:pert}

We now consider the perturbed Dirichlet problem \eqref{princeqpertu}.  The first step in our analysis consists in transforming   problem \eqref{princeqpertu} into a  system of integral equations and we do so by exploiting a representation formula of the solution in terms of the single layer potential.

Let $\alpha \in \mathopen]0,1[$, $T>0$ and let $\Omega^{o}$, $\Omega^{i}$ be as in \eqref{introsetconditions}.  We set
\[
\mathcal{X}_0^{\frac{\alpha}{2};  \alpha}:=C_0^{\frac{\alpha}{2};  \alpha}([0,T] \times \partial\Omega^{o}) \times C_0^{\frac{\alpha}{2};  \alpha}([0,T] \times \partial\Omega^{i})
\]
and
\[
\mathcal{X}_0^{\frac{1+\alpha}{2};  1+\alpha}:=C_0^{\frac{1+\alpha}{2};  1+\alpha}([0,T] \times \partial\Omega^{o}) \times C_0^{\frac{1+\alpha}{2};  1+\alpha}([0,T] \times \partial\Omega^{i})\, .
\]

Then we define the map $\mathcal{M}=(\mathcal{M}_1,\mathcal{M}_2)$  from $\mathcal{A}^{\Omega^{o}}_{\partial\Omega^{i}}$ to $\mathcal{L}(\mathcal{X}_0^{\frac{\alpha}{2};  \alpha},\mathcal{X}_0^{\frac{1+\alpha}{2};  1+\alpha})$ that takes $\phi \in \mathcal{A}^{\Omega^{o}}_{\partial\Omega^{i}}$ to the linear and continuous operator from $\mathcal{X}_0^{\frac{\alpha}{2};  \alpha}$ to $\mathcal{X}_0^{\frac{1+\alpha}{2};  1+\alpha}$ given by
\begin{equation}\label{M}
\begin{aligned}
\mathcal{M}_1[\phi](\mu^o,\mu^i) & := V_{\partial\Omega^{o}}  [\mu^o] + v^-_{\Omega^{i}[\phi]}[\mu^i \circ (\phi^T)^{(-1)}]_{ |[0,T] \times \partial\Omega^{o}} \qquad &{\mbox{ on } \, [0,T] \times \partial\Omega^{o}},
\\
\mathcal{M}_2[\phi](\mu^o,\mu^i) & := v^+_{\Omega^{o}}[\mu^o]_{|[0,T] \times \partial\Omega^{i}[\phi]}\circ \phi^T + V_{\partial\Omega^{i}[\phi]}[\mu^i\circ (\phi^T)^{(-1)}] \circ \phi^T \,  &{\mbox{ on } \, [0,T] \times \partial\Omega^{i}},
\end{aligned}
\end{equation}
for all $(\mu^o,\mu^i) \in \mathcal{X}_0^{\frac{\alpha}{2};  \alpha}$. Here {above} and in the sequel, if $A$ is a subset of $\mathbb{R}^n$,
$T >0$ and $h$ is a map from $A$ to $\mathbb{R}^n$, we denote by $h^T$ the map from  $[0,T] \times A$
 to  $[0,T] \times \mathbb{R}^n$ defined by 
\[
h^T(t,x) := (t, h(x)) \quad \forall (t,x) \in  [0,T] \times A.
\]
From the definition of $\mathcal{M}$ and the properties of the single layer potential, we  deduce the validity of the following:
\begin{proposition}\label{prop M=0}
Let $\alpha \in \mathopen]0,1[$ and $T>0$. Let $\Omega^{o}$, $\Omega^{i}$ be as in \eqref{introsetconditions}. Let
\begin{equation*}
(\phi,\mu^o,\mu^i, g^o, g^i ) \in \mathcal{A}^{\Omega^{o}}_{\partial\Omega^{i}}\times  \mathcal{X}^{\frac{\alpha}{2};  \alpha} \times \mathcal{X}^{\frac{1+\alpha}{2};1+  \alpha}\, .
\end{equation*}
{Then the function 
\[
(v^+_{\Omega^{o}} [\mu^o] + v^-_{\Omega^{i}[\phi]}[\mu^i\circ (\phi^T)^{(-1)}])_{| [0,T] \times (\overline{\Omega^{o}} \setminus {\Omega^{i}[\phi]})}
\]}  
is a solution of problem \eqref{princeqpertu} if and only if 
\begin{equation}\label{M=0}
\mathcal{M}[\phi](\mu^o,\mu^i)=(g^o, g^i).
\end{equation}
Moreover, for each $(\phi, g^o, g^i ) \in \mathcal{A}^{\Omega^{o}}_{\partial\Omega^{i}}\times \mathcal{X}^{\frac{1+\alpha}{2};  1+\alpha}$, there exists a unique pair $(\mu^o,\mu^i)\in  \mathcal{X}^{\frac{\alpha}{2};  \alpha}$ such that \eqref{M=0} holds true and we denote such a pair by $(\mu^o[\phi, g^o, g^i],\mu^i[\phi, g^o, g^i])$.
\end{proposition}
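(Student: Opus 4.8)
The statement comprises two assertions: the equivalence between being a solution of \eqref{princeqpertu} and satisfying the integral equation \eqref{M=0}, and the unique solvability of \eqref{M=0}. The plan is to treat the equivalence by direct verification and the solvability by Fredholm theory together with a uniqueness argument.

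For the equivalence, I would argue using the properties collected in Theorem \ref{thmsl}. By Theorem \ref{thmsl}(i), both $v^+_{\Omega^{o}}[\mu^o]$ and $v^-_{\Omega^{i}[\phi]}[\mu^i\circ(\phi^T)^{(-1)}]$ solve the heat equation in the interior of $\Omega^{o}\setminus\overline{\Omega^{i}[\phi]}$ (where neither integration support lies) and vanish at $t=0$ because the densities belong to the subspaces with subscript $0$; thus the heat equation and the initial condition in \eqref{princeqpertu} are automatically satisfied, and only the two Dirichlet conditions remain. Since single layer potentials are continuous across their integration support, the trace of the candidate on $[0,T]\times\partial\Omega^{o}$ equals $V_{\partial\Omega^{o}}[\mu^o]+v^-_{\Omega^{i}[\phi]}[\mu^i\circ(\phi^T)^{(-1)}]_{|[0,T]\times\partial\Omega^{o}}=\mathcal{M}_1[\phi](\mu^o,\mu^i)$, so the outer condition reads $\mathcal{M}_1[\phi](\mu^o,\mu^i)=g^o$. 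Likewise the trace on $[0,T]\times\partial\Omega^{i}[\phi]$ is $v^+_{\Omega^{o}}[\mu^o]_{|[0,T]\times\partial\Omega^{i}[\phi]}+V_{\partial\Omega^{i}[\phi]}[\mu^i\circ(\phi^T)^{(-1)}]$; composing with $\phi^T$ and using $g^i(t,\phi^{(-1)}(\phi(y)))=g^i(t,y)$ turns the inner condition into $\mathcal{M}_2[\phi](\mu^o,\mu^i)=g^i$. This yields the equivalence.

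For the solvability I would show that $\mathcal{M}[\phi]$ is invertible. Its diagonal part $(\mu^o,\mu^i)\mapsto(V_{\partial\Omega^{o}}[\mu^o],V_{\partial\Omega^{i}[\phi]}[\mu^i\circ(\phi^T)^{(-1)}]\circ\phi^T)$ is an isomorphism by Theorem \ref{thmsl}(iv) together with the fact that composition with the diffeomorphism $\phi^T$ and with $(\phi^T)^{(-1)}$ is an isomorphism of the relevant Schauder spaces. The off-diagonal terms evaluate a single layer potential supported on one boundary at points of the other; since $\partial\Omega^{o}$ and $\partial\Omega^{i}[\phi]$ are disjoint and at positive distance, the heat kernel $S_n(t-\tau,x-y)$ is smooth on the relevant set, so these terms are smoothing and hence compact. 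Thus $\mathcal{M}[\phi]$ is a compact perturbation of an isomorphism, hence Fredholm of index zero, and it suffices to prove injectivity.

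To prove injectivity, suppose $\mathcal{M}[\phi](\mu^o,\mu^i)=0$, set $\tilde\mu^i:=\mu^i\circ(\phi^T)^{(-1)}$, and let $w:=v_{\Omega^{o}}[\mu^o]+v_{\Omega^{i}[\phi]}[\tilde\mu^i]$ on all of $[0,T]\times\mathbb{R}^n$. By the equivalence just proved, the restriction of $w$ to the perforated domain solves the homogeneous version of \eqref{princeqpertu}, so by uniqueness of that solution $w$ vanishes there. Being continuous across both boundaries, $w$ has vanishing traces on $\partial\Omega^{o}$ and on $\partial\Omega^{i}[\phi]$; invoking uniqueness for the interior Dirichlet problem in $\Omega^{i}[\phi]$ and for the exterior Dirichlet problem in $(\Omega^{o})^-$ (where $w$ inherits decay at spatial infinity from the single layer potential), together with the zero initial data, I conclude $w=0$ on $[0,T]\times\mathbb{R}^n$. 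Finally the jump relations of Theorem \ref{thmsl}(iii) give $\mu^o=0$ across $\partial\Omega^{o}$ (the contribution of $v_{\Omega^{i}[\phi]}[\tilde\mu^i]$ being smooth there) and $\tilde\mu^i=0$ across $\partial\Omega^{i}[\phi]$, whence $\mu^o=\mu^i=0$. I expect this last injectivity step to be the main obstacle, since it requires tracking both potentials simultaneously and applying uniqueness for the Dirichlet problem in three separate regions before extracting the densities via the jump relations.
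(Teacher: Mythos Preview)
Your argument is correct. The equivalence part matches the paper's approach essentially verbatim. For the invertibility of $\mathcal{M}[\phi]$, however, you take a different route from the paper.

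The paper observes that, after conjugating the second component by the diffeomorphism $\phi^T$, the operator $\mathcal{M}[\phi]$ is nothing but the single layer trace operator $V_{\partial(\Omega^o\setminus\overline{\Omega^i[\phi]})}$ on the full boundary $\partial\Omega^o\cup\partial\Omega^i[\phi]$ of the perforated domain, with the density split over the two components. Since Theorem~\ref{thmsl}(iv) applies to any bounded open $C^{1,\alpha}$ set (no connectedness of the boundary is assumed), invertibility follows in one stroke. Your approach instead decomposes $\mathcal{M}[\phi]$ into its invertible diagonal part and compact off-diagonal part, then establishes injectivity by propagating the vanishing of $w$ through the three complementary regions via Dirichlet uniqueness and finally recovering $\mu^o=\mu^i=0$ from the jump relations. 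This Fredholm-plus-injectivity argument is standard and perfectly valid; it has the merit of being self-contained and of making explicit why the coupling terms do not spoil invertibility, at the cost of being considerably longer than the paper's one-line appeal to Theorem~\ref{thmsl}(iv) for the perforated domain.
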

\begin{proof}
We first note that if $\phi \in \mathcal{A}^{\Omega^{o}}_{\partial\Omega^{i}}$, {then} $(\mu^o, \mu^i) \in C_0^{\frac{\alpha}{2};  \alpha}([0,T] \times \partial\Omega^{o}) \times C_0^{\frac{\alpha}{2};  \alpha}([0,T] \times \partial\Omega^{i})$ if and only if
    \begin{equation*}(\mu^o,\mu^i \circ (\phi^T)^{(-1)}) \in C_0^{\frac{\alpha}{2};  \alpha}([0,T] \times \partial\Omega^{o}) \times C_0^{\frac{\alpha}{2};  \alpha}([0,T] \times \phi(\partial\Omega^{i})).
    \end{equation*}
 Then, by the properties of the single layer potential (cf.~Theorem \ref{thmsl}), by a change of variable on $\phi(\partial \Omega^{i})$,  and by the definition \eqref{M} of $\mathcal{M}$, we deduce that the function
    \begin{equation*}
(v^+_{\Omega^{o}} [\mu] + v^-_{\Omega^{i}[\phi]}[\eta \circ (\phi^T)^{(-1)}])_{|[0,T]\times \overline{\Omega^{o}} \setminus \Omega^{i}[\phi]}
    \end{equation*}
    is the unique solution of problem \eqref{princeqpertu} if and only if \eqref{M=0} is satisfied. To conclude, we observe that by the invertibility of the trace of the single layer potential (see Theorem \ref{thmsl}) and by the definition of $\mathcal{M}$, we deduce 
    that for each $\phi \in \mathcal{A}^{\Omega^{o}}_{\partial\Omega^{i}}$ the map $\mathcal{M}[\phi]$ is a linear and continuous invertible operator in $\mathcal{L}(\mathcal{X}_0^{\frac{\alpha}{2};  \alpha},\mathcal{X}_0^{\frac{1+\alpha}{2};  1+\alpha})$.
     \end{proof}

     \begin{remark}
     By Proposition \ref{prop M=0}, we clearly have
     \[
     u_{\phi,g^o,g^i}=(v^+_{\Omega^{o}} [\mu^o[\phi,g^o,g^i]] + v^-_{\Omega^{i}[\phi]}[\mu^i[\phi,g^o,g^i]\circ (\phi^T)^{(-1)}])_{| [0,T] \times (\overline{\Omega^{o}} \setminus {\Omega^{i}[\phi]})}
     \]
     for all $(\phi, g^o, g^i ) \in \mathcal{A}^{\Omega^{o}}_{\partial\Omega^{i}}\times \mathcal{X}^{\frac{1+\alpha}{2};  1+\alpha}$, {where we recall that $u_{\phi,g^o,g^i}$ is the unique solution to problem \eqref{princeqpertu}.} 
     \end{remark}

In order to study the regularity of the operator $\mathcal{M}$, we  need some results on the operators involved in its definition. We begin with the following technical lemma of Lanza de Cristoforis and Rossi \cite[p.~166]{LaRo04} and Lanza de Cristoforis \cite[Prop.~1]{La07} on the smoothness of certain maps related to the change of variables in integrals and to the {pull-back} of the
outer normal field.

\begin{lemma}\label{lemma change of variable}
	Let $\alpha \in \mathopen]0,1[$. Let $\Omega^{i}$ be an open bounded connected subset of $\mathbb{R}^n$ of class $C^{1,\alpha}$ with connected exterior $\mathbb{R}^n\setminus \overline{\Omega^{i}}$. Then the following hold.
    \begin{itemize}
        \item[(i)] For each $\phi \in \mathcal{A}_{\partial\Omega^{i}}$ there exists a unique $\tilde{\sigma}_n[\phi] \in C^{0,\alpha}(\partial\Omega^{i})$ such that 
        \[
    	\int_{\phi(\partial\Omega^{i})} f(y) \,d\sigma_y = \int_{\partial\Omega^{i}} f(\phi(s)) \, \tilde{\sigma}_n[\phi](s) \,d\sigma_s \quad\forall f \in L^1(\phi(\partial\Omega^{i})).
    	\]
     Moreover, $\tilde{\sigma}_n[\phi] >0$ and the map taking $\phi$ to $\tilde{\sigma}_n[\phi] $ is real analytic from $\mathcal{A}_{\partial\Omega^{i}}$ to $C^{0,\alpha}(\partial\Omega^{i})$.

     \item[(ii)] The map from  $\mathcal{A}_{\partial\Omega^{i}}$ to $C^{0,\alpha}(\partial\Omega^{i})$  that  takes $\phi$ to ${\nu_{\Omega^{i}[\phi]}} \circ \phi$ is real analytic.
    \end{itemize}
\end{lemma}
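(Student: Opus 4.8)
The plan is to reduce everything to local parametrizations of $\partial\Omega^{i}$ and to recognize that both $\tilde{\sigma}_n[\phi]$ and the pulled-back normal $\nu_{\Omega^{i}[\phi]}\circ\phi$ are built from the tangential first derivatives of $\phi$ by purely algebraic operations, followed by composition with a fixed real-analytic function (a square root). I fix a finite atlas $\{\gamma_k : U_k \to \partial\Omega^{i}\}$ of $C^{1,\alpha}$ parametrizations with $U_k \subseteq \mathbb{R}^{n-1}$ open, together with a subordinate partition of unity. Since $C^{0,\alpha}(\partial\Omega^{i})$ is defined chart-by-chart, it suffices to prove the two analyticity statements in each chart and then glue; the gluing is harmless because multiplication by fixed $C^{0,\alpha}$ cut-off functions is linear and continuous, hence analyticity-preserving. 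Throughout I use that real analyticity is a \emph{local} property, so I may always work in a small neighborhood of an arbitrary fixed $\phi_0 \in \mathcal{A}_{\partial\Omega^{i}}$.

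For part (i), in the chart $\gamma_k$ the map $\phi\circ\gamma_k$ parametrizes a piece of $\phi(\partial\Omega^{i})$, and the pullback of surface measure is $\sqrt{\det G_\phi}\,du$, where $G_\phi := D(\phi\circ\gamma_k)^{\mathrm{T}}\, D(\phi\circ\gamma_k)$ is the Gram matrix of the tangential derivatives. Comparing with the same formula for $\partial\Omega^{i}$ itself (the case $\phi=\mathrm{id}$) identifies
\[
\tilde{\sigma}_n[\phi]\circ\gamma_k = \frac{\sqrt{\det G_\phi}}{\sqrt{\det G_{\mathrm{id}}}}\,,
\]
which yields existence and uniqueness at once (the change-of-variables identity holding first for continuous $f$ and then for $f\in L^1$ by a standard approximation argument), and shows $\tilde{\sigma}_n[\phi]>0$, since the injectivity of $d\phi(y)$ for every $y$ forces $\det G_\phi$ to be strictly positive, and indeed bounded away from $0$ uniformly for $\phi$ near $\phi_0$. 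For the analyticity: the map $\phi \mapsto D(\phi\circ\gamma_k)$ is linear and continuous from $C^{1,\alpha}(\partial\Omega^{i},\mathbb{R}^n)$ into the space of $C^{0,\alpha}$ matrix-valued functions on $U_k$; forming $G_\phi$ and then $\det G_\phi$ uses only sums and products, which are analytic because $C^{0,\alpha}$ is a Banach algebra; and composing the resulting strictly positive $C^{0,\alpha}$ function with the real-analytic map $t\mapsto\sqrt{t}$ on $\mathopen]0,+\infty[$ gives a real-analytic superposition operator. The composition of these analytic maps is the claim.

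Part (ii) follows the same pattern. In the chart $\gamma_k$ the unnormalized normal to $\phi(\partial\Omega^{i})$ at $\phi(\gamma_k(u))$ is the generalized cross product of the $n-1$ columns of $D(\phi\circ\gamma_k)$, i.e.\ the vector whose components are, up to sign, the $(n-1)\times(n-1)$ minors of that Jacobian; this is a fixed multilinear---hence polynomial and analytic---function of $D(\phi\circ\gamma_k)$ with values in $C^{0,\alpha}(U_k,\mathbb{R}^n)$. By the Cauchy--Binet identity its Euclidean length squared equals $\det G_\phi$, which is strictly positive and bounded below, so dividing by $\sqrt{\det G_\phi}$ (again a real-analytic superposition, now with $t\mapsto t^{-1/2}$) produces a unit normal field depending analytically on $\phi$. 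Working near a fixed $\phi_0$, the sign relating this generalized cross product to the \emph{outer} normal $\nu_{\Omega^{i}[\phi]}$ is locally constant (the cross product never vanishes and varies continuously), so selecting the outer orientation introduces only a fixed sign and does not affect analyticity. Pulling back to $\partial\Omega^{i}$ and gluing the charts completes part (ii).

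The main obstacle is precisely the analyticity of the superposition (Nemytskii) operators $u\mapsto\sqrt{u}$ and $u\mapsto u^{-1/2}$ on the open subset of $C^{0,\alpha}$ consisting of functions with range in $\mathopen]0,+\infty[$ and bounded away from $0$. This is where the Banach-algebra structure of $C^{0,\alpha}$ is essential: expanding $\sqrt{u_0+h}$ as a power series in $h$ about a fixed positive $u_0$, the estimate $\|h_1 h_2\|_{C^{0,\alpha}} \le C\,\|h_1\|_{C^{0,\alpha}}\,\|h_2\|_{C^{0,\alpha}}$ supplies the geometric control needed for local uniform convergence of the series of homogeneous polynomials. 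This is exactly the content of the composition-operator results of Lanza de Cristoforis and Rossi \cite{LaRo04} and Lanza de Cristoforis \cite{La07}; everything else---the linear algebra of Gram matrices and cross products, the gluing of charts, and the orientation bookkeeping---is routine once this analyticity of superposition is available.
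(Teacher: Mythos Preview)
Your argument is correct and is essentially the approach of the references the paper invokes: the paper does not prove this lemma at all but states it as a known technical result, citing Lanza de Cristoforis and Rossi \cite[p.~166]{LaRo04} and Lanza de Cristoforis \cite[Prop.~1]{La07}. Your chart-by-chart computation via Gram determinants and generalized cross products, together with the Banach-algebra analyticity of the relevant Nemytskii operators on $C^{0,\alpha}$, is precisely the content of those cited works, so there is nothing to correct or compare.
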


Similarly, we collect in the lemma below, some smoothness results concerning the operators involved in the definition of $\mathcal{M}$.

\begin{lemma}\label{lem:reg}
Let $\alpha \in \mathopen]0,1[$ and $T>0$. Let $\Omega^{o}$, $\Omega^{i}$ be as in \eqref{introsetconditions}. Then the following statements hold.
\begin{itemize}
\item[(i)] The map that takes $\phi$ to the  operator
\[
\mu^i\mapsto V_{\partial\Omega^{i}[\phi]}[\mu^i\circ (\phi^T)^{(-1)}] \circ \phi^T
\]
is of class $C^\infty$ from $\mathcal{A}^{\Omega^{o}}_{\partial\Omega^{i}}$ to $\mathcal{L}(C_0^{\frac{\alpha}{2};  \alpha}([0,T] \times \partial\Omega^{i}),C_0^{\frac{1+\alpha}{2};  1+\alpha}([0,T] \times \partial\Omega^{i}))$. 
\item[(ii)] The map that takes $\phi$ to the operator
\[
\mu^i\mapsto  v^-_{\Omega^{i}[\phi]}[\mu^i \circ (\phi^T)^{(-1)}]_{|[0,T] \times\partial \Omega^{o}}
\]
is of class $C^\infty$
from $\mathcal{A}^{\Omega^{o}}_{\partial\Omega^{i}}$ to $\mathcal{L}(C_0^{\frac{\alpha}{2};  \alpha}([0,T] \times \partial\Omega^{i}),C_0^{\frac{1+\alpha}{2};  1+\alpha}([0,T] \times \partial\Omega^{o}))$.
\item[(iii)] The map that takes $\phi$ to the operator
\[
\mu^o\mapsto v^+_{\Omega^{o}}[\mu^o]_{|[0,T] \times \partial\Omega^{i}[\phi]}\circ \phi^T 
\]
is of class $C^\infty$
from $\mathcal{A}^{\Omega^{o}}_{\partial\Omega^{i}}$ to $\mathcal{L}(C_0^{\frac{\alpha}{2};  \alpha}([0,T] \times \partial\Omega^{o}),C_0^{\frac{1+\alpha}{2};  1+\alpha}([0,T] \times \partial\Omega^{i}))$.
\end{itemize}
\end{lemma}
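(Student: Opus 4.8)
The plan is to treat the three maps separately, in each case pulling the relevant integral back to the \emph{fixed} reference boundaries $\partial\Omega^{i}$ and $\partial\Omega^{o}$, so that the only $\phi$-dependence sits inside the heat kernel and inside the change-of-variables factor $\tilde{\sigma}_n[\phi]$ of Lemma \ref{lemma change of variable}. Throughout I would fix $\phi_0\in\mathcal{A}^{\Omega^{o}}_{\partial\Omega^{i}}$ and establish smoothness only on a neighborhood of $\phi_0$, which suffices. Since $\overline{\Omega^{i}[\phi_0]}\subseteq\Omega^{o}$ is an open condition, such a neighborhood can be chosen so that $\phi(\partial\Omega^{i})$ stays inside a fixed compact subset of $\Omega^{o}$ and, in particular, so that $\mathrm{dist}(\phi(\partial\Omega^{i}),\partial\Omega^{o})$ is bounded below by a positive constant uniformly in $\phi$. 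One also records that each of these operators lands automatically in the subscript-$0$ spaces, since the time integral runs from $0$ to $t$ and is thus empty at $t=0$.

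For part (i), the substitutions $y=\phi(s)$, $x=\phi(\xi)$ together with Lemma \ref{lemma change of variable}(i) rewrite the operator as
\[
\bigl(V_{\partial\Omega^{i}[\phi]}[\mu^i\circ(\phi^T)^{(-1)}]\circ\phi^T\bigr)(t,\xi)=\int_0^t\!\!\int_{\partial\Omega^{i}}S_n\bigl(t-\tau,\phi(\xi)-\phi(s)\bigr)\,\tilde{\sigma}_n[\phi](s)\,\mu^i(\tau,s)\,d\sigma_s\,d\tau .
\]
This is precisely the \emph{pulled-back} trace of the heat single layer potential on the moving boundary, and its $C^\infty$ dependence on $\phi$, as a map into $\mathcal{L}(C_0^{\frac{\alpha}{2};\alpha}([0,T]\times\partial\Omega^{i}),C_0^{\frac{1+\alpha}{2};1+\alpha}([0,T]\times\partial\Omega^{i}))$, is exactly the content of the smooth-dependence results of \cite{DaLu23}, which are designed to deal with the singularity of $S_n$ on the diagonal $\xi=s$, $\tau=t$. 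I would therefore simply invoke those results together with the analyticity of $\phi\mapsto\tilde{\sigma}_n[\phi]$. This is the genuinely hard case, and it is already available in the literature we rely on.

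Parts (ii) and (iii) are the \emph{non-singular} interaction operators between the two separated boundaries, and these are the cases to be carried out here. The same pull-back gives, for $x\in\partial\Omega^{o}$,
\[
v^-_{\Omega^{i}[\phi]}[\mu^i\circ(\phi^T)^{(-1)}](t,x)=\int_0^t\!\!\int_{\partial\Omega^{i}}S_n\bigl(t-\tau,x-\phi(s)\bigr)\,\tilde{\sigma}_n[\phi](s)\,\mu^i(\tau,s)\,d\sigma_s\,d\tau ,
\]
and, for $\xi\in\partial\Omega^{i}$,
\[
\bigl(v^+_{\Omega^{o}}[\mu^o]_{|[0,T]\times\partial\Omega^{i}[\phi]}\circ\phi^T\bigr)(t,\xi)=\int_0^t\!\!\int_{\partial\Omega^{o}}S_n\bigl(t-\tau,\phi(\xi)-y\bigr)\,\mu^o(\tau,y)\,d\sigma_y\,d\tau .
\]
In both integrals the spatial argument of $S_n$ stays at a distance bounded away from zero, by the uniform separation of the two boundaries noted above, so no singularity of $S_n$ is ever met; recalling that $S_n$ is flat as $t\to 0^+$ away from the spatial origin, the kernel is jointly $C^\infty$ in all its arguments on the relevant region. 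The $\phi$-dependence enters only through the composition $s\mapsto\phi(s)$ (resp.\ $\xi\mapsto\phi(\xi)$) plugged into this smooth kernel and, in (ii), through the real-analytic factor $\tilde{\sigma}_n[\phi]$ of Lemma \ref{lemma change of variable}(i).

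I would then conclude by differentiating under the integral sign: thanks to the uniform lower bound on the distance between the supports, every $\phi$-derivative yields a kernel that is again smooth and whose parabolic Schauder norm, as a function on $\partial\Omega^{o}$ (resp.\ $\partial\Omega^{i}$), can be estimated in terms of the $C^{1,\alpha}$ norm of $\phi$ and the $C^{\frac{\alpha}{2};\alpha}$ norm of the density, uniformly on the chosen neighborhood; the gain of one spatial derivative and the $\frac{1+\alpha}{2}$ time-Hölder regularity come for free from the smoothness of the kernel and the time-convolution structure. This shows that the Fréchet differentials of all orders exist and are continuous, i.e.\ that the two operator-valued maps are $C^\infty$. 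Equivalently, one may present these operators as integral operators with non-singular real-analytic kernels depending on the parameter $\phi$ and invoke the standard smoothness results of the Functional Analytic Approach, as used in \cite{LaRo04,DaLa10,DaLuMoMu25}. The only delicate point is the uniform separation of the supports, which is exactly what makes (ii) and (iii) avoid the heat-kernel singularity and hence considerably easier than (i).
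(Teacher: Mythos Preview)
Your proposal is correct and follows essentially the same approach as the paper: part (i) is deferred to the results of \cite{DaLu23} (specifically Theorem~5.4 there), while for (ii) and (iii) the paper writes out the same pulled-back integrals, observes that $x-\phi(s)\neq 0$ on $\partial\Omega^{o}\times\partial\Omega^{i}$, and then invokes \cite[Lemmas~A.2, A.3]{DaLu23} on time-dependent integral operators with non-singular kernels and on superposition operators, together with Lemma~\ref{lemma change of variable}. The only point the paper makes slightly more explicit is the passage from joint $C^\infty$-smoothness of $(\phi,\mu)\mapsto\cdot$ to $C^\infty$-smoothness of the operator-valued map $\phi\mapsto(\mu\mapsto\cdot)$, which follows from linearity in $\mu$ by standard calculus in Banach spaces.
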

\begin{proof}
Statement (i) follows by \cite[Thm.~5.4]{DaLu23}. We now consider statement (ii). We first note that
\[
v^-_{\Omega^{i}[\phi]}[\mu^i \circ (\phi^T)^{(-1)}](t,x)= \int_{0}^{t} \int_{\partial \Omega^{i}} S_{n}(t-\tau,x-\phi(s)) \mu^i(\tau, s) \, \tilde{\sigma}_n[\phi](s) \,d\sigma_s d\tau \quad \forall\,(t,x) \in [0, T] \times \partial \Omega^{o}\, ,
\]
for all $(\phi,\mu^i)\in \mathcal{A}^{\Omega^{o}}_{\partial\Omega^{i}} \times C_0^{\frac{\alpha}{2};  \alpha}([0,T] \times \partial\Omega^{i})$.  Since $\phi \in \mathcal{A}^{\Omega^{o}}_{\partial\Omega^{i}}$, then
\[
x-\phi(s) \neq 0 \quad\forall (x,s) \in  \partial \Omega^{o}  \times \partial\Omega^{i}.
\]
 Accordingly, \cite[Lemmas  A.2, A.3]{DaLu23} on the regularity of time-dependent integral operators with non-singular kernels and of superposition operators and  Lemma \ref{lemma change of variable} imply that the map that takes $(\phi,\mu^i)\in \mathcal{A}^{\Omega^{o}}_{\partial\Omega^{i}} \times C_0^{\frac{\alpha}{2};  \alpha}([0,T] \times \partial\Omega^{i})$ to $v^-_{\Omega^{i}[\phi]}[\mu^i \circ (\phi^T)^{(-1)}]_{|[0,T]\times \partial \Omega^{o}} \in C_0^{\frac{1+\alpha}{2};  1+\alpha}([0,T] \times \partial\Omega^{i})$ is of class $C^\infty$. Since for $\phi \in \mathcal{A}^{\Omega^{o}}_{\partial\Omega^{i}}$ the map
 \[
 \mu^i \mapsto v^-_{\Omega^{i}[\phi]}[\mu^i \circ (\phi^T)^{(-1)}]_{|[0,T] \times\partial \Omega^{o}}
 \]
 is linear from $C_0^{\frac{\alpha}{2};  \alpha}([0,T] \times \partial\Omega^{i})$ to $C_0^{\frac{1+\alpha}{2};  1+\alpha}([0,T] \times \partial\Omega^{o})$, by standard calculus in Banach spaces, we deduce the validity of (ii). Finally, the proof of (iii) is similar to the one of statement (ii) and is accordingly left to the reader.
\end{proof}

By Theorem \ref{thmsl} and Lemma \ref{lem:reg}, we can now deduce the validity of the following result on the smoothness of the map $\mathcal{M}$.

\begin{proposition}\label{prop Mrealanal}
  Let $\alpha \in \mathopen]0,1[$ and $T>0$. Let $\Omega^{o}$, $\Omega^{i}$ be as in \eqref{introsetconditions}.Then the map $\mathcal{M}$ from $\mathcal{A}^{\Omega^{o}}_{\partial\Omega^{i}}$ to $\mathcal{L}(\mathcal{X}_0^{\frac{\alpha}{2};  \alpha},\mathcal{X}_0^{\frac{1+\alpha}{2};  1+\alpha})$ is of class $C^\infty$.
\end{proposition}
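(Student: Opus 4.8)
The plan is to reduce the statement to the three smoothness results already isolated in Lemma~\ref{lem:reg}, together with the trivial observation that the one remaining term in \eqref{M} does not depend on $\phi$. The structural fact I would exploit first is that both the source space $\mathcal{X}_0^{\frac{\alpha}{2};  \alpha}$ and the target space $\mathcal{X}_0^{\frac{1+\alpha}{2};  1+\alpha}$ are Cartesian products of two parabolic Schauder spaces. Hence $\mathcal{L}(\mathcal{X}_0^{\frac{\alpha}{2};  \alpha},\mathcal{X}_0^{\frac{1+\alpha}{2};  1+\alpha})$ is canonically linearly homeomorphic to the direct sum of the four ``block'' operator spaces between the respective factors, and under this identification $\mathcal{M}[\phi]$ is exactly the $2\times 2$ matrix of operators read off from \eqref{M}. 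Since a map into a finite product of Banach spaces is of class $C^\infty$ if and only if each of its components is, it suffices to check that each of the four block maps sending $\phi$ to the corresponding block of $\mathcal{M}[\phi]$ is $C^\infty$.

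I would then dispatch the four blocks in turn. The $(1,1)$ block sends $\mu^o \mapsto V_{\partial\Omega^{o}}[\mu^o]$; this operator is independent of $\phi$ and is bounded by Theorem~\ref{thmsl}(iv), so the corresponding map $\phi \mapsto V_{\partial\Omega^{o}}$ is constant and thus trivially of class $C^\infty$. The $(1,2)$ block, $\mu^i \mapsto v^-_{\Omega^{i}[\phi]}[\mu^i\circ(\phi^T)^{(-1)}]_{|[0,T]\times\partial\Omega^{o}}$, is precisely the map shown to be $C^\infty$ in Lemma~\ref{lem:reg}(ii); the $(2,1)$ block, $\mu^o \mapsto v^+_{\Omega^{o}}[\mu^o]_{|[0,T]\times\partial\Omega^{i}[\phi]}\circ\phi^T$, is the map of Lemma~\ref{lem:reg}(iii); and the $(2,2)$ block, $\mu^i \mapsto V_{\partial\Omega^{i}[\phi]}[\mu^i\circ(\phi^T)^{(-1)}]\circ\phi^T$, is the map of Lemma~\ref{lem:reg}(i). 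Combining the four via the componentwise criterion yields that $\mathcal{M}$ is $C^\infty$.

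The genuine analytic content---the smooth dependence on $\phi$ of the layer-potential blocks, which ultimately rests on the shape-sensitivity results of \cite{DaLu23}---is already packaged inside Lemma~\ref{lem:reg}, so the main obstacle has been discharged upstream. What remains for this proposition is essentially formal: the only point deserving a line of care is the canonical identification of $\mathcal{L}$ of a product of Banach spaces with the product of block operator spaces, together with the standard fact that differentiability into a finite product is componentwise. Accordingly, I expect no real difficulty here beyond verifying that the four summands appearing in \eqref{M} match, term by term, the three statements of Lemma~\ref{lem:reg} plus the single $\phi$-independent term.
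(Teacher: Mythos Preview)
Your proposal is correct and is exactly the argument the paper intends: the paper's own proof consists of the single sentence ``By Theorem~\ref{thmsl} and Lemma~\ref{lem:reg}, we can now deduce the validity of the following result,'' and your block-by-block breakdown is precisely the unpacking of that sentence. The only minor remark is that the boundedness of the $(1,1)$ block $V_{\partial\Omega^o}$ already follows from Theorem~\ref{thmsl}(ii) (or (iv)), and otherwise there is nothing to add.
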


By  Proposition \ref{prop Mrealanal} and   the invertibility of $\mathcal{M}$ we can prove that the pair $(\mu^o[\phi, g^o, g^i],\mu^i[\phi, g^o, g^i])$ is of class $C^\infty$ as a function of the triple $(\phi, g^o, g^i )$. We do so in the following theorem.

\begin{theorem}\label{Lambda Thm}
  Let $\alpha \in \mathopen]0,1[$ and $T>0$. Let $\Omega^{o}$, $\Omega^{i}$ be as in \eqref{introsetconditions}. Then the map from $\mathcal{A}^{\Omega^{o}}_{\partial\Omega^{i}}\times \mathcal{X}_0^{\frac{1+\alpha}{2};  1+\alpha}$ to $\mathcal{X}_0^{\frac{\alpha}{2};  \alpha}$ that takes $(\phi, g^o, g^i )$ to $(\mu^o[\phi, g^o, g^i],\mu^i[\phi, g^o, g^i])$ is of class $C^\infty$.\end{theorem}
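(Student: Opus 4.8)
The plan is to write the pair $(\mu^o[\phi,g^o,g^i],\mu^i[\phi,g^o,g^i])$ explicitly as the image of $(g^o,g^i)$ under the inverse operator $\mathcal{M}[\phi]^{-1}$, and then to deduce smoothness by combining the already-established smoothness of $\mathcal{M}$ with the real analyticity of operator inversion. First I would recall from Proposition \ref{prop M=0} that for each $(\phi,g^o,g^i)$ the defining equation $\mathcal{M}[\phi](\mu^o,\mu^i)=(g^o,g^i)$ has a unique solution and that $\mathcal{M}[\phi]$ is a linear homeomorphism from $\mathcal{X}_0^{\frac{\alpha}{2};\alpha}$ onto $\mathcal{X}_0^{\frac{1+\alpha}{2};1+\alpha}$. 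Consequently
\[
(\mu^o[\phi,g^o,g^i],\mu^i[\phi,g^o,g^i]) = \mathcal{M}[\phi]^{-1}(g^o,g^i),
\]
so the map under study factors through the family of inverse operators $\mathcal{M}[\phi]^{-1}$.

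Next I would invoke the standard fact of calculus in Banach spaces that the set of invertible elements of $\mathcal{L}(\mathcal{X}_0^{\frac{\alpha}{2};\alpha},\mathcal{X}_0^{\frac{1+\alpha}{2};1+\alpha})$ is open and that the inversion map, sending an invertible operator $A$ to $A^{-1}\in\mathcal{L}(\mathcal{X}_0^{\frac{1+\alpha}{2};1+\alpha},\mathcal{X}_0^{\frac{\alpha}{2};\alpha})$, is real analytic (it is locally represented by a Neumann series). Since Proposition \ref{prop Mrealanal} guarantees that the map $\phi\mapsto\mathcal{M}[\phi]$ is of class $C^\infty$ from $\mathcal{A}^{\Omega^{o}}_{\partial\Omega^{i}}$ to $\mathcal{L}(\mathcal{X}_0^{\frac{\alpha}{2};\alpha},\mathcal{X}_0^{\frac{1+\alpha}{2};1+\alpha})$, and since Proposition \ref{prop M=0} ensures that its values lie in the aforementioned open set of invertible operators, composition with the inversion map yields that $\phi\mapsto\mathcal{M}[\phi]^{-1}$ is of class $C^\infty$ from $\mathcal{A}^{\Omega^{o}}_{\partial\Omega^{i}}$ to $\mathcal{L}(\mathcal{X}_0^{\frac{1+\alpha}{2};1+\alpha},\mathcal{X}_0^{\frac{\alpha}{2};\alpha})$.

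Finally I would write the desired map as the composition of the map $(\phi,g^o,g^i)\mapsto(\mathcal{M}[\phi]^{-1},(g^o,g^i))$ with the evaluation map $(A,(g^o,g^i))\mapsto A(g^o,g^i)$. The first of these is $C^\infty$ by the previous step together with the smoothness of the identity map on $\mathcal{X}_0^{\frac{1+\alpha}{2};1+\alpha}$, while the evaluation map is bilinear and continuous, hence $C^\infty$. By the chain rule in Banach spaces, the composition is of class $C^\infty$, which is precisely the assertion of the theorem.

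I do not expect any serious obstacle in this argument: all the genuine analytic content---the smooth dependence of the layer-potential operators on the shape parameter---is already packaged in Proposition \ref{prop Mrealanal}, and the remaining ingredients are the routine facts that operator inversion is real analytic and that bounded bilinear maps are smooth. The only point requiring explicit care is the verification that the values of $\mathcal{M}$ indeed fall inside the open set of invertible operators, so that composition with inversion is legitimate; this is exactly what the invertibility statement in Proposition \ref{prop M=0} provides.
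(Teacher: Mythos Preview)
Your proposal is correct and follows essentially the same route as the paper: both proofs write $(\mu^o,\mu^i)=\mathcal{M}[\phi]^{-1}(g^o,g^i)$, invoke the real analyticity of operator inversion together with Proposition~\ref{prop Mrealanal} to obtain $C^\infty$-smoothness of $\phi\mapsto\mathcal{M}[\phi]^{-1}$, and then conclude via the continuity of the bilinear evaluation map. The only cosmetic difference is that the paper cites Hille and Phillips for the analyticity of inversion, whereas you appeal to the Neumann series directly.
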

\begin{proof}
Let $(\phi, g^o, g^i ) \in \mathcal{A}^{\Omega^{o}}_{\partial\Omega^{i}}\times \mathcal{X}_0^{\frac{1+\alpha}{2};  1+\alpha}$. By the invertibility of $\mathcal{M}[\phi]$, we have
\[
(\mu^o[\phi, g^o, g^i],\mu^i[\phi, g^o, g^i])=(\mathcal{M}[\phi])^{(-1)}(g^o, g^i )\, .
\]
Then we recall that the map that takes a linear invertible operator to its inverse is real analytic (cf.~Hille and Phillips \cite[Thms. 4.3.2 and 4.3.4]{HiPh57}), and therefore of class $C^\infty$.
Accordingly, by Proposition \ref{prop Mrealanal}, we also deduce that  the map from $\mathcal{A}^{\Omega^{o}}_{\partial\Omega^{i}}$ to $\mathcal{L}(\mathcal{X}_0^{\frac{1+\alpha}{2};  1+\alpha}, \mathcal{X}_0^{\frac{\alpha}{2};  \alpha})$ that takes $\phi$ to $\mathcal{M}[\phi]^{(-1)}$ is of class $C^\infty$. Since the bilinear map from $\mathcal{L}(\mathcal{X}_0^{\frac{1+\alpha}{2};  1+\alpha}, \mathcal{X}_0^{\frac{\alpha}{2};  \alpha})\times \mathcal{X}_0^{\frac{1+\alpha}{2};  1+\alpha}$ to $\mathcal{X}_0^{\frac{\alpha}{2};  \alpha}$ that takes a pair $(G,u)$ to $G(u)$ is continuous, we deduce that 
\[
(\phi, g^o, g^i)\mapsto(\mathcal{M}[\phi])^{(-1)}(g^o, g^i )
\]
is of class $C^{\infty}$, and thus the proof is complete.
\end{proof}

By Theorem \ref{Lambda Thm} we know that the densities  $\mu^o[\phi, g^o, g^i]$ and $\mu^i[\phi, g^o, g^i]$ are smooth functions of the shape parameter $\phi$ and of the Dirichlet data $g^o$ and $g^i$. As a consequence by plugging them in the integral representation of the solution, we can deduce a smoothness result for suitable restrictions of the solution $u_{\phi,g^o,g^i}$. Therefore, in the following theorem, {we take a bounded open set $\Omega_\mathtt{int}$  such that
\[
{\overline \Omega_\mathtt{int}}\subseteq  \Omega^{o}\setminus \overline{\Omega^{i}}
\] 
 and we consider the function $u_{\phi,g^o,g^i}$ for those diffeomorphisms $\phi$ such that $\overline{\Omega_\mathtt{int}}\subseteq \Omega^{o} \setminus \overline{\Omega^{i}[\phi]}$.

\begin{theorem}\label{thm:smoothrep}
  Let $\alpha \in \mathopen]0,1[$ and $T>0$. Let $\Omega^{o}$, $\Omega^{i}$ be as in \eqref{introsetconditions}.  {Let $\Omega_\mathtt{int}$ be a bounded open subset of $\Omega^{o}\setminus \overline{\Omega^{i}}$ such that
\[
{\overline \Omega_\mathtt{int}}\subseteq  \Omega^{o}\setminus \overline{\Omega^{i}}
\]} 
and let $Q_\mathtt{int}$ be an open subset of $\mathcal{A}^{\Omega^{o}}_{\partial\Omega^{i}}$ such that   
\begin{equation}\label{cond Omega int}
    \overline{\Omega_\mathtt{int}} \subseteq \Omega^{o} \setminus \overline{\Omega^{i}[\phi]} \quad \forall \phi \in Q_\mathtt{int}.
\end{equation}
Then the map from $Q_\mathtt{int} \times \mathcal{X}_{0}^{\frac{1+\alpha}{2}; 1+\alpha}$ to $C_{0}^{\frac{1+\alpha}{2}; 1+\alpha}([0,T] \times \overline{\Omega_\mathtt{int}})$ that takes $(\phi,g^o,g^i)$ to $\left(u_{\phi,g^o,g^i}\right)_{|[0,T]\times\overline{\Omega_\mathtt{int}}}$ is of class $C^\infty$.
\end{theorem}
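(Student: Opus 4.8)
The plan is to start from the integral representation
\[
u_{\phi,g^o,g^i}=\bigl(v^+_{\Omega^{o}} [\mu^o[\phi,g^o,g^i]] + v^-_{\Omega^{i}[\phi]}[\mu^i[\phi,g^o,g^i]\circ (\phi^T)^{(-1)}]\bigr)_{| [0,T] \times (\overline{\Omega^{o}} \setminus {\Omega^{i}[\phi]})}
\]
recorded in the remark following Proposition \ref{prop M=0}, and to restrict it to $[0,T]\times\overline{\Omega_\mathtt{int}}$. Since Theorem \ref{Lambda Thm} already guarantees that $(\phi,g^o,g^i)\mapsto(\mu^o[\phi,g^o,g^i],\mu^i[\phi,g^o,g^i])$ is of class $C^\infty$ from $Q_\mathtt{int}\times\mathcal{X}_0^{\frac{1+\alpha}{2};1+\alpha}$ to $\mathcal{X}_0^{\frac{\alpha}{2};\alpha}$, the usual composition rules for maps between Banach spaces reduce the problem to showing that the two ``evaluation on $\Omega_\mathtt{int}$'' maps
\[
\mu^o\mapsto v^+_{\Omega^o}[\mu^o]_{|[0,T]\times\overline{\Omega_\mathtt{int}}},
\qquad
(\phi,\mu^i)\mapsto v^-_{\Omega^{i}[\phi]}[\mu^i\circ(\phi^T)^{(-1)}]_{|[0,T]\times\overline{\Omega_\mathtt{int}}}
\]
are themselves $C^\infty$, and then to add the resulting contributions. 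The decisive feature I would exploit throughout is that, by hypothesis, $\overline{\Omega_\mathtt{int}}$ stays at positive distance from both integration supports $\partial\Omega^o$ and $\phi(\partial\Omega^i)$, so that on $\Omega_\mathtt{int}$ the heat kernel $S_n$ is evaluated only away from its singularity.

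The first map is the easy one. Because $\partial\Omega^o$ is fixed and $\overline{\Omega_\mathtt{int}}$ is disjoint from it, the restriction $\mu^o\mapsto v^+_{\Omega^o}[\mu^o]_{|[0,T]\times\overline{\Omega_\mathtt{int}}}$ is the composition of the single layer operator of Theorem \ref{thmsl}(ii) with a continuous restriction map; it is linear and continuous from $C_0^{\frac{\alpha}{2};\alpha}([0,T]\times\partial\Omega^o)$ to $C_0^{\frac{1+\alpha}{2};1+\alpha}([0,T]\times\overline{\Omega_\mathtt{int}})$, hence automatically $C^\infty$, and it does not depend on $\phi$ at all. Composing with the smooth density map of Theorem \ref{Lambda Thm} disposes of this term.

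The second map is where the shape dependence genuinely enters, and it is the step I expect to require the real work. I would treat it exactly as in the proof of Lemma \ref{lem:reg}(ii), the only change being that the observation point now ranges over $\overline{\Omega_\mathtt{int}}$ rather than over $\partial\Omega^o$. Using Lemma \ref{lemma change of variable} to pull the integral back onto the fixed manifold $\partial\Omega^i$, one writes
\[
v^-_{\Omega^{i}[\phi]}[\mu^i\circ(\phi^T)^{(-1)}](t,x)=\int_0^t\!\!\int_{\partial\Omega^{i}}S_n(t-\tau,x-\phi(s))\,\mu^i(\tau,s)\,\tilde{\sigma}_n[\phi](s)\,d\sigma_s\,d\tau
\]
for $(t,x)\in[0,T]\times\overline{\Omega_\mathtt{int}}$. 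The key point is to verify that the kernel is non-singular: condition \eqref{cond Omega int} ensures $x-\phi(s)\neq0$ for all $x\in\overline{\Omega_\mathtt{int}}$ and $s\in\partial\Omega^i$, and---since $Q_\mathtt{int}$ is open and $\phi\mapsto\phi(\partial\Omega^i)$ varies continuously---this separation is uniform on a neighbourhood of any fixed $\phi_0\in Q_\mathtt{int}$. With non-singularity secured, the smoothness results of \cite[Lemmas~A.2, A.3]{DaLu23} on time-dependent integral operators with non-singular kernels and on superposition operators, combined with the analyticity of $\phi\mapsto\tilde{\sigma}_n[\phi]$ from Lemma \ref{lemma change of variable}, yield that $(\phi,\mu^i)\mapsto v^-_{\Omega^{i}[\phi]}[\mu^i\circ(\phi^T)^{(-1)}]_{|[0,T]\times\overline{\Omega_\mathtt{int}}}$ is of class $C^\infty$. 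Since smoothness is a local property, this local-in-$\phi_0$ argument suffices.

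To conclude, I would compose each evaluation map with the smooth density map of Theorem \ref{Lambda Thm}---using, as in the proof of that theorem, that composition and the continuous bilinear evaluation $(G,u)\mapsto G(u)$ preserve $C^\infty$ regularity---and then add the two contributions. The only genuine obstacle is the interior term; everything else is bookkeeping, and the heart of the matter is reducing to a non-singular kernel on $\Omega_\mathtt{int}$ so that the machinery of \cite{DaLu23} applies.
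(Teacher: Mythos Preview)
Your proposal is correct and follows essentially the same route as the paper's proof: write the solution via its single-layer representation, invoke Theorem \ref{Lambda Thm} for the smoothness of the densities, pull the inner integral back to $\partial\Omega^i$ via Lemma \ref{lemma change of variable}, and apply the non-singular-kernel results \cite[Lemmas A.2, A.3]{DaLu23} after observing that $x-\phi(s)\neq0$ on $\overline{\Omega_\mathtt{int}}\times\partial\Omega^i$. The only minor addition in the paper is a preliminary ``without loss of generality $\Omega_\mathtt{int}$ is of class $C^{1,\alpha}$'' reduction, which you may wish to insert so that the cited lemmas apply verbatim; your slightly different handling of the outer term via Theorem \ref{thmsl}(ii) and restriction is a harmless variant.
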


\begin{proof}
We assume, without loss of generality, that $\Omega_\mathtt{int}$ is of class $C^{1,\alpha}$. By Proposition \ref{prop M=0},  for every $\phi \in Q_\mathtt{int}$ we have 
    \begin{equation}\label{eq:repa}
    \begin{split}
        u_{\phi,g^o,g^i}(t,x) &=\int_{0}^{t} \int_{\partial \Omega^{o}} S_{n}(t-\tau, x-y) \mu^o[\phi,g^o,g^i](\tau, y) \,d\sigma_y d\tau 
        \\ 
      &  \qquad +
        \int_{0}^{t} \int_{\phi(\partial \Omega^{i})} S_{n}(t-\tau, x-\tilde{y}) \mu^i[\phi,g^o,g^i] \circ (\phi^T)^{(-1)} (t,\tilde{y})\,d\sigma_{\tilde{y}} d\tau
        \\
        &= \int_{0}^{t} \int_{\partial \Omega^{o}} S_{n}(t-\tau, x-y) \mu^o[\phi,g^o,g^i](\tau, y) \,d\sigma_y d\tau 
        \\
       & \qquad  +\int_{0}^{t} \int_{\partial \Omega^{i}} S_{n}(t-\tau, x-\phi(y)) \mu^i[\phi,g^o,g^i](\tau, y) \tilde{\sigma}_n[\phi](y) \,d\sigma_y d\tau \, ,
    \end{split}
    \end{equation}
    for all $(t,x) \in [0,T] \times \overline{\Omega_\mathtt{int}}$. By \eqref{cond Omega int} we have
    \begin{equation*}
        x-y \neq 0 \quad\forall (x,y) \in  \overline{\Omega_\mathtt{int}} \times \partial \Omega^{o} \quad \text{and} \quad x-\phi(y) \neq 0 \quad\forall (x,y) \in  \overline{\Omega_\mathtt{int}} \times \partial\Omega^{i}.
    \end{equation*}
 Accordingly, \cite[Lemmas  A.2, A.3]{DaLu23} on the regularity of time-dependent integral operators with non-singular kernels and of superposition operators,  Theorem \ref{Lambda Thm}, and  Lemma \ref{lemma change of variable} imply that the integral operators in {the right hand side of} \eqref{eq:repa} define $C^\infty$ maps from $Q_\mathtt{int}$ to $C_{0}^{\frac{1+\alpha}{2}; 1+\alpha}([0,T] \times \overline{\Omega_\mathtt{int}})$.
 \end{proof}

}

\section{An application to a map in inverse problems} \label{s:inv}

In Chapko, Kress, and Yoon \cite{ChKrYo98} the authors have established the existence of the domain derivative with respect to the interior boundary curve of the normal derivative of a specific Dirichlet initial boundary value  problem in $[0,T]\times \mathbb{R}^2$. The problem they considered is exactly \eqref{princeqpertu} with $g^i=0$. Their aim was  to   construct the interior boundary curve of an arbitrary-shaped annulus from overdetermined Cauchy data on the exterior boundary curve.

Inspired by \cite{ChKrYo98}, {we want to show} the $C^\infty$ smoothness of the same map they considered, but for the more general problem  \eqref{princeqpertu}. We also want to provide a characterization of the shape {differential} of such a map. Namely, from Theorem \ref{thm:smoothrep} we can deduce the following.

\begin{theorem}\label{thm:smoothrepbis}
  Let $\alpha \in \mathopen]0,1[$ and $T>0$. Let $\Omega^{o}$, $\Omega^{i}$ be as in \eqref{introsetconditions}. Then the map from $\mathcal{A}^{\Omega^{o}}_{\partial\Omega^{i}} \times \mathcal{X}^{\frac{1+\alpha}{2};1+  \alpha}$ to $C_{0}^{\frac{\alpha}{2}; \alpha}([0,T] \times \partial \Omega^{o})$ that takes $(\phi,g^o,g^i)$ to $\frac{\partial}{\partial \nu_{\Omega^{o}}}u_{\phi,g^o,g^i}$ is of class $C^\infty$.
\end{theorem}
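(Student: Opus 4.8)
The plan is to start from the integral representation of the solution and to compute its normal derivative on the fixed boundary $\partial\Omega^o$, separating the contribution of the density on $\partial\Omega^o$ (which requires the jump relations) from the contribution of the density on the moving boundary $\partial\Omega^i[\phi]$ (whose kernel is non-singular on $\partial\Omega^o$). Recall from the Remark following Proposition \ref{prop M=0} that
\[
u_{\phi,g^o,g^i} = \bigl(v^+_{\Omega^{o}}[\mu^o[\phi,g^o,g^i]] + v^-_{\Omega^{i}[\phi]}[\mu^i[\phi,g^o,g^i]\circ(\phi^T)^{(-1)}]\bigr)_{|[0,T]\times(\overline{\Omega^{o}}\setminus\Omega^{i}[\phi])}.
\]
Near $\partial\Omega^o$ the domain lies in the interior of $\Omega^o$, so the relevant trace of the first potential is the $+$ trace, and by the jump relations of Theorem \ref{thmsl}(iii), for $(t,x)\in[0,T]\times\partial\Omega^o$,
\[
\frac{\partial}{\partial\nu_{\Omega^{o}}} v^+_{\Omega^{o}}[\mu^o[\phi,g^o,g^i]](t,x) = \tfrac{1}{2}\mu^o[\phi,g^o,g^i](t,x) + W^*_{\partial\Omega^{o}}[\mu^o[\phi,g^o,g^i]](t,x).
\]
Since $\overline{\Omega^{i}[\phi]}\subseteq\Omega^{o}$, the density of the second potential is supported on $\partial\Omega^i[\phi]$, which is disjoint from $\partial\Omega^o$; hence its normal derivative on $\partial\Omega^o$ carries no jump and, after the change of variables of Lemma \ref{lemma change of variable}, equals
\[
\int_{0}^{t}\int_{\partial\Omega^{i}}\nu_{\Omega^{o}}(x)\cdot\nabla_x S_n(t-\tau,x-\phi(s))\,\mu^i[\phi,g^o,g^i](\tau,s)\,\tilde{\sigma}_n[\phi](s)\,d\sigma_s\,d\tau.
\]

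Next I would establish $C^\infty$-smoothness term by term. By Theorem \ref{Lambda Thm}, the map $(\phi,g^o,g^i)\mapsto\mu^o[\phi,g^o,g^i]$ is of class $C^\infty$ into $C_0^{\frac{\alpha}{2};\alpha}([0,T]\times\partial\Omega^o)$. The crucial observation is that the operator $W^*_{\partial\Omega^{o}}$ does \emph{not} depend on $\phi$, since $\partial\Omega^o$ is fixed; by Theorem \ref{thmsl}(v) it is a bounded linear operator from $C_0^{\frac{\alpha}{2};\alpha}([0,T]\times\partial\Omega^o)$ into itself. Therefore the two terms $\tfrac{1}{2}\mu^o[\phi,g^o,g^i]$ and $W^*_{\partial\Omega^{o}}[\mu^o[\phi,g^o,g^i]]$ are compositions of a $C^\infty$ map with a continuous linear operator, hence themselves $C^\infty$ into $C_0^{\frac{\alpha}{2};\alpha}([0,T]\times\partial\Omega^o)$. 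For the remaining term I would argue exactly as in the proof of Theorem \ref{thm:smoothrep}: since $\overline{\Omega^{i}[\phi]}\subseteq\Omega^{o}$ we have $x-\phi(s)\neq0$ for all $(x,s)\in\partial\Omega^{o}\times\partial\Omega^{i}$, so the kernel $\nabla_x S_n(t-\tau,x-\phi(s))$ is non-singular. Combining \cite[Lemmas A.2, A.3]{DaLu23} on time-dependent integral operators with non-singular kernels and on superposition operators, the real analyticity of $\phi\mapsto\tilde{\sigma}_n[\phi]$ from Lemma \ref{lemma change of variable}, and the $C^\infty$ dependence of $\mu^i[\phi,g^o,g^i]$ from Theorem \ref{Lambda Thm}, yields that this term defines a $C^\infty$ map from $\mathcal{A}^{\Omega^{o}}_{\partial\Omega^{i}}\times\mathcal{X}^{\frac{1+\alpha}{2};1+\alpha}$ into $C_0^{\frac{\alpha}{2};\alpha}([0,T]\times\partial\Omega^o)$. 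Summing the three $C^\infty$ contributions would give the assertion.

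The hard part is the singularity of the kernel of the first single layer potential at $\partial\Omega^o$: one cannot simply differentiate under the integral sign to obtain the normal derivative there, which is why Theorem \ref{thm:smoothrep} only controls the solution on sets $\overline{\Omega_{\mathtt{int}}}$ staying away from the boundary. The jump relations of Theorem \ref{thmsl}(iii) are precisely what circumvents this: they reduce the singular contribution to the explicit expression $\tfrac{1}{2}\mu^o+W^*_{\partial\Omega^{o}}[\mu^o]$, in which the entire $\phi$-dependence is isolated in the density $\mu^o$, while the operator $W^*_{\partial\Omega^{o}}$ remains fixed. By contrast, the second potential never touches $\partial\Omega^o$, so its contribution is handled by the non-singular kernel machinery already employed for Theorem \ref{thm:smoothrep}. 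The only point requiring a little care is to confirm that every term indeed lands in the target space $C_0^{\frac{\alpha}{2};\alpha}([0,T]\times\partial\Omega^o)$, which holds because $\mu^o$ and $W^*_{\partial\Omega^{o}}[\mu^o]$ both lie in this space by Theorem \ref{thmsl}(v), and the regular integral term enjoys at least this regularity thanks to the smoothness of its kernel.
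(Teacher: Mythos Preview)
Your proposal is correct and follows essentially the same approach as the paper: both proofs decompose $\frac{\partial}{\partial\nu_{\Omega^o}}u_{\phi,g^o,g^i}$ into the jump-relation term $\tfrac{1}{2}\mu^o+W^*_{\partial\Omega^o}[\mu^o]$ coming from the $\partial\Omega^o$-density and the non-singular integral coming from the $\partial\Omega^i[\phi]$-density, then invoke Theorem~\ref{Lambda Thm} for the smooth dependence of the densities, the fixed linear operator $W^*_{\partial\Omega^o}$ for the first part, and the argument of Theorem~\ref{thm:smoothrep} (i.e., \cite[Lemmas~A.2,~A.3]{DaLu23} and Lemma~\ref{lemma change of variable}) for the second. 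Your explanatory remarks on why the jump relations are needed are accurate and add clarity beyond the paper's more terse presentation.
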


\begin{proof}
By Proposition \ref{prop M=0},  for every $(\phi, g^o,g^i) \in \mathcal{A}^{\Omega^{o}}_{\partial\Omega^{i}}\times \mathcal{X}^{\frac{1+\alpha}{2};1+  \alpha}$ we have 
\[
    \begin{split}
        u_{\phi,g^o,g^i}(t,x) 
        &= \int_{0}^{t} \int_{\partial \Omega^{o}} S_{n}(t-\tau, x-y) \mu^o[\phi,g^o,g^i](\tau, y) \,d\sigma_y d\tau 
        \\
       & \qquad  +\int_{0}^{t} \int_{\partial \Omega^{i}} S_{n}(t-\tau, x-\phi(y)) \mu^i[\phi,g^o,g^i](\tau, y) \tilde{\sigma}_n[\phi](y) \,d\sigma_y d\tau \, ,
    \end{split}
\]
    for all $(t,x) \in [0,T] \times (\overline{\Omega^{o}} \setminus \Omega^{i}[\phi])$, where $\mu^o[\cdot]$ and $\mu^i[\cdot]$ are as in Proposition \ref{prop M=0}. As a consequence, by Theorem \ref{thmsl},
\[
    \begin{split}
       \frac{\partial}{\partial \nu_{\Omega^{o}}} u_{\phi,g^o,g^i}(t,x) 
        &=\frac{1}{2}\mu^o[\phi,g^o,g^i](t, x)+  \int_{0}^{t} \int_{\partial \Omega^{o}}  \frac{\partial}{\partial \nu_{\Omega^o}(x)} S_{n}(t-\tau, x-y) \mu^o[\phi,g^o,g^i](\tau, y) \,d\sigma_y d\tau 
        \\
       & \qquad  +\int_{0}^{t} \int_{\partial \Omega^{i}} \frac{\partial}{\partial \nu_{\Omega^o}(x)}  S_{n}(t-\tau, x-\phi(y)) \mu^i[\phi,g^o,g^i](\tau, y) \tilde{\sigma}_n[\phi](y) \,d\sigma_y d\tau \, ,
    \end{split}
\]
    for all $(t,x) \in [0,T] \times  \partial \Omega^{o}$. Again, by Theorems \ref{thmsl} and \ref{Lambda Thm}, we deduce that the map  from $\mathcal{A}^{\Omega^{o}}_{\partial\Omega^{i}} \times \mathcal{X}^{\frac{1+\alpha}{2};1+  \alpha}$ to $C_{0}^{\frac{\alpha}{2}; \alpha}([0,T] \times \partial {\Omega^o})$ that takes $(\phi,g^o,g^i)$ to the function
    \[
    \frac{1}{2}\mu^o[\phi,g^o,g^i](t, x)+  \int_{0}^{t} \int_{\partial \Omega^{o}}  \frac{\partial}{\partial \nu_{\Omega^o}(x)} S_{n}(t-\tau, x-y) \mu^o[\phi,g^o,g^i](\tau, y) \,d\sigma_y d\tau 
    \]
    of the variable $(t,x) \in [0,T] \times  \partial \Omega^{o}$ is of class $C^\infty$. Moreover, by arguing as in the proof of Theorem \ref{thm:smoothrep}, we deduce that the map  from $\mathcal{A}^{\Omega^o}_{\partial\Omega^{i}} \times \mathcal{X}^{\frac{1+\alpha}{2};1+  \alpha}$ to $C_{0}^{\frac{\alpha}{2}; \alpha}([0,T] \times \partial \Omega^{o})$ that takes $(\phi,g^o,g^i)$ to the function
    \[
\int_{0}^{t} \int_{\partial \Omega^{i}} \frac{\partial}{\partial \nu_{\Omega^o}(x)}  S_{n}(t-\tau, x-\phi(y)) \mu^i[\phi,g^o,g^i](\tau, y) \tilde{\sigma}_n[\phi](y) \,d\sigma_y d\tau
    \]
    of the variable $(t,x) \in [0,T] \times  \partial \Omega^{o}$ is of class $C^\infty$. Thus the proof is complete.
     \end{proof}

\subsection{On the computation of {the shape differential}}  \label{s:diff}

The next step is to compute the shape differential of the map $\phi \mapsto \frac{\partial}{\partial \nu_{\Omega^{o}}}u_{\phi,g^o,g^i}$.  As we have mentioned in the Introduction, such differential can be seen as the partial differential with respect to $\phi$ of the Dirichlet-to-Neumann-type map $\Lambda$ defined by
\[
(\phi,g^o, g^i)\mapsto \Lambda[\phi,g^o, g^i]:=\frac{\partial}{\partial \nu_{\Omega^o}}u_{\phi,g^o, g^i}\, .
\]
From now on,  we consider problem \eqref{princeqpertu} with $g^o$ and $g^i$ fixed and we focus on the dependence on the infinite dimensional shape parameter $\phi$. As a consequence, we simply denote by $u_{\phi}$ the solution of problem \eqref{princeqpertu} and by    $(\mu^o[\phi],\mu^i[\phi])$  the solution to equation \eqref{M=0}.

We start by computing the differentials of some functionals related to layer potential operators involved in the definition of $\mathcal{M}$.

  Let $\alpha \in \mathopen]0,1[$ and $T>0$. Let $\Omega^{o}$, $\Omega^{i}$ be as in \eqref{introsetconditions}.

\begin{itemize}
\item We denote by $V_1$ the $C^\infty$ map from $\mathcal{A}^{\Omega^{o}}_{\partial\Omega^{i}}\times C_0^{\frac{\alpha}{2};  \alpha}([0,T] \times \partial\Omega^{i})$ to $C_0^{\frac{1+\alpha}{2};  1+\alpha}([0,T] \times \partial\Omega^{i})$ that takes $(\phi,\mu) \in \mathcal{A}^{\Omega^{o}}_{\partial\Omega^{i}}\times C_0^{\frac{\alpha}{2};  \alpha}([0,T] \times \partial\Omega^{i})$ to 
\begin{equation}\label{eq:V1}
V_1[\phi,\mu]:= V_{\partial\Omega^{i}[\phi]}[\mu \circ (\phi^T)^{(-1)}] \circ \phi^T \, .
\end{equation}
\item We denote by $V_2$ the $C^\infty$ map  from $\mathcal{A}^{\Omega^{o}}_{\partial\Omega^{i}}\times C_0^{\frac{\alpha}{2};  \alpha}([0,T] \times \partial\Omega^{i})$ to $C_0^{\frac{1+\alpha}{2};  1+\alpha}([0,T] \times \partial\Omega^{o})$ that takes $(\phi,\mu) \in \mathcal{A}^{\Omega^{o}}_{\partial\Omega^{i}}\times C_0^{\frac{\alpha}{2};  \alpha}([0,T] \times \partial\Omega^{i})$ to
\begin{equation}\label{eq:V2}
V_2[\phi,\mu]:=  v^-_{\Omega^{i}[\phi]}[\mu  \circ (\phi^T)^{(-1)}]_{|[0,T]\times \partial \Omega^{o}} \, .
\end{equation}
\item We denote by $V_3$ the $C^\infty$ map from $\mathcal{A}^{\Omega^{o}}_{\partial\Omega^{i}} \times C_0^{\frac{\alpha}{2};  \alpha}([0,T] \times \partial\Omega^{o})$ to $C_0^{\frac{1+\alpha}{2};  1+\alpha}([0,T] \times \partial\Omega^{i})$ that takes $(\phi,\mu) \in \mathcal{A}^{\Omega^{o}}_{\partial\Omega^{i}}\times C_0^{\frac{\alpha}{2};  \alpha}([0,T] \times \partial\Omega^{o})$ to 
\begin{equation}\label{eq:V3}
V_3[\phi,\mu]:= v^+_{\Omega^{o}}[\mu ]_{|[0,T] \times \partial\Omega^{i}[\phi]}\circ \phi^T \, .
\end{equation}
\end{itemize}

Our next step is to compute the partial differentials of the maps $V_1$, $V_2$, and $V_3$ of \eqref{eq:V1}--\eqref{eq:V3}. The differentials of $V_2$ and $V_3$ can be obtained using standard calculus in Banach spaces, as they involve non-singular integral operators. Instead, computing the differential of $V_1$ requires more care. {The proof is exactly the one} used in the proof of Lanza de Cristoforis and Rossi  \cite[Prop.~3.14]{LaRo04}, where, in place of their inequality (3.29), {here one needs to employ} the inquality 
\[
\left| \nabla_x S_n(t,x)\right| \leq K  t^{-\frac{n+1}{2} }e^{-\frac{|x|^2}{8t}}\quad\text{for all }(t,x)\in \mathopen]0,+\infty\mathclose [ \times \mathbb{R}^{n}\,,
\]
which holds  for some $K>0$ (see Ladyzhenskaja, Solonnikov and Ural'ceva \cite[p. 274]{LaSoUr68}). We also make use of \cite[Thm.~5.3]{DaLu23}, \cite[Lem.~7.1]{LaLu17}. The results are presented in the following Proposition \ref{prop:diff}, where statement (i) concerns  $V_1$, while statements (ii) and (iii) address $V_2$ and $V_3$, respectively.

\begin{proposition}\label{prop:diff}   Let $\alpha \in \mathopen]0,1[$ and $T>0$. Let $\Omega^{o}$, $\Omega^{i}$ be as in \eqref{introsetconditions}. Then the following staments hold.
\begin{itemize}
\item[(i)] If $(\phi_0,\mu_0) \in \mathcal{A}^{\Omega^{o}}_{\partial\Omega^{i}}\times C_0^{\frac{\alpha}{2};  \alpha}([0,T] \times \partial\Omega^{i})$ then the  partial  differential of $V_1[\cdot,\cdot ]$ at $(\phi_0,\mu_0)$ with respect to the variable $\phi$ is delivered by the formula
\begin{equation}\label{eq:diffV1}
\begin{split}
\partial_\phi & V_1[\phi_0,\mu_0][h](t,\xi)\\
&=\int_0^t\int_{\partial \Omega^{i}} \nabla  S_n(t-\tau,\phi_0(\xi)-\phi_0(s))\cdot \left(h(\xi)-h (s)\right) \tilde{\sigma}_n[\phi_0] (s)\mu_0(\tau,s)\,d\sigma_s\, d\tau\\
& \quad +\int_0^t\int_{\partial \Omega^{i}}  S_n(t-\tau,\phi_0(\xi)-\phi_0(s)) d_\phi \tilde{\sigma}_n[\phi_0][h] (s)\mu_0(\tau,s)\,d\sigma_s\, d\tau,
\end{split}
\end{equation}
for all $(t,\xi) \in [0,T] \times \partial\Omega^{i}$ and for all $h  \in C^{1,\alpha}(\partial \Omega^{i}, \mathbb{R}^n)$.
\item[(ii)] {If $(\phi_0,\mu_0)\in  \mathcal{A}^{\Omega^{o}}_{\partial\Omega^{i}}\times C_0^{\frac{\alpha}{2};  \alpha}([0,T] \times \partial\Omega^{i})$} then the partial  differential of $V_2[\cdot,\cdot ]$ at $(\phi_0,\mu_0)$ with respect to the variable $\phi$ is delivered by the formula
\begin{equation}\label{eq:diffV2}
\begin{split}
\partial_\phi & V_2[\phi_0,\mu_0][h](t,x)\\
&=-\int_0^t\int_{\partial \Omega^{i}} \nabla  S_n(t-\tau,x-\phi_0(s))\cdot h (s) \tilde{\sigma}_n[\phi_0] (s)\mu_0(\tau,s)\,d\sigma_s\, d\tau\\
& \quad +\int_0^t\int_{\partial \Omega^{i}}  S_n(t-\tau,x-\phi_0(s)) d_\phi \tilde{\sigma}_n[\phi_0][h] (s)\mu_0(\tau,s)\,d\sigma_s\, d\tau,
\end{split}\end{equation}
for all $(t,x) \in [0,T] \times \partial\Omega^{o}$ and for all $h \in C^{1,\alpha}(\partial \Omega^{i}, \mathbb{R}^n)$.
\item[(iii)] If $(\phi_0,\mu_0)\in \mathcal{A}^{\Omega^{o}}_{\partial\Omega^{i}} \times C_0^{\frac{\alpha}{2};  \alpha}([0,T] \times \partial\Omega^{o})$ then the partial  differential of $V_3[\cdot,\cdot ]$ at $(\phi_0,\mu_0)$ with respect to the variable $\phi$ is delivered by the formula
\begin{equation}\label{eq:diffV3}
\begin{split}
\partial_\phi & V_3[\phi_0,\mu_0][h](t,\xi)\\
&=\int_0^t\int_{\partial \Omega^{o}}\nabla  S_n(t-\tau,\phi_0(\xi)-y) \cdot h(\xi) \mu_0(\tau,y)\,d\sigma_y\, d\tau\, ,
\end{split}\end{equation}
for all $(t,\xi) \in [0,T] \times \partial\Omega^{i}$ and for all $h \in C^{1,\alpha}(\partial \Omega^{i}, \mathbb{R}^n)$.
\end{itemize}
\end{proposition}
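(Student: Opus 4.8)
The plan is to reduce each of the three maps to an explicit integral over the \emph{fixed} boundaries $\partial\Omega^o$, $\partial\Omega^i$ by means of the change of variables formula of Lemma \ref{lemma change of variable}, and then to differentiate with respect to $\phi$ in a direction $h \in C^{1,\alpha}(\partial\Omega^i,\mathbb{R}^n)$. The decisive distinction is between the \emph{singular} kernel that appears in $V_1$ and the \emph{non-singular} kernels appearing in $V_2$ and $V_3$; this dichotomy determines which tools are available.

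First I would rewrite the maps explicitly. Pushing the integration back to $\partial\Omega^i$ via Lemma \ref{lemma change of variable}, one obtains for instance
\[
V_2[\phi,\mu](t,x)=\int_0^t\int_{\partial\Omega^i}S_n(t-\tau,x-\phi(s))\,\mu(\tau,s)\,\tilde\sigma_n[\phi](s)\,d\sigma_s\,d\tau\qquad(x\in\partial\Omega^o),
\]
and the analogous expression for $V_1$ with $x$ replaced by $\phi(\xi)$, $\xi\in\partial\Omega^i$, whereas $V_3$ is already an integral over the fixed set $\partial\Omega^o$ with evaluation point $\phi(\xi)$. The point is that in $V_2$ the argument $x-\phi(s)$ stays bounded away from $0$, since $x\in\partial\Omega^o$ while $\phi(s)\in\partial\Omega^i[\phi]$ and $\overline{\Omega^i[\phi]}\subseteq\Omega^o$; the same holds for $\phi(\xi)-y$ in $V_3$. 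Hence the kernels of $V_2$ and $V_3$ are non-singular.

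For statements (ii) and (iii) I would then invoke the regularity results for time-dependent integral operators with non-singular kernels and for superposition operators \cite[Lemmas A.2, A.3]{DaLu23}, together with the analyticity of $\phi\mapsto\tilde\sigma_n[\phi]$ from Lemma \ref{lemma change of variable}. As the kernels are smooth on the relevant region, differentiation under the integral sign is justified by standard calculus in Banach spaces, and a chain rule computation yields the claimed formulas: in (ii), differentiating $S_n(t-\tau,x-\phi(s))$ in the direction $h$ produces the factor $-\nabla S_n(t-\tau,x-\phi_0(s))\cdot h(s)$, while the second term of \eqref{eq:diffV2} comes from differentiating $\tilde\sigma_n[\phi]$; in (iii), since both the density and the integration support $\partial\Omega^o$ are independent of $\phi$, only the evaluation point $\phi(\xi)$ contributes, giving $\nabla S_n(t-\tau,\phi_0(\xi)-y)\cdot h(\xi)$ and no $\tilde\sigma_n$ term.

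The main obstacle is statement (i), where the kernel $S_n(t-\tau,\phi(\xi)-\phi(s))$ becomes singular as $s\to\xi$, so differentiation under the integral is not directly licit. Here I would follow the argument of Lanza de Cristoforis and Rossi \cite[Prop.~3.14]{LaRo04}, forming the difference quotient of $V_1$ at $(\phi_0,\mu_0)$ and expanding the kernel to first order. The key structural fact is that in the candidate differential \eqref{eq:diffV1} the singular factor $\nabla S_n(t-\tau,\phi_0(\xi)-\phi_0(s))$ is paired with the difference $h(\xi)-h(s)$, whose decay as $s\to\xi$ compensates the singularity; the required control is provided by the gradient estimate
\[
\left|\nabla_x S_n(t,x)\right|\leq K\,t^{-\frac{n+1}{2}}e^{-\frac{|x|^2}{8t}}
\]
quoted before the statement, used in place of inequality (3.29) of \cite{LaRo04}. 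The mapping properties needed to identify the limit as a bounded operator into the correct parabolic Schauder space come from \cite[Thm.~5.3]{DaLu23} and \cite[Lem.~7.1]{LaLu17}, and the $\tilde\sigma_n$ contribution is handled as in (ii). I expect the technically delicate point to be showing that the first-order remainder tends to zero in the $C_0^{\frac{1+\alpha}{2};1+\alpha}$ norm uniformly in the parabolic variables.
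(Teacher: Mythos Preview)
Your proposal is correct and follows essentially the same approach as the paper: for (ii) and (iii) you invoke the non-singularity of the kernels and standard calculus in Banach spaces (via \cite[Lemmas~A.2, A.3]{DaLu23} and Lemma~\ref{lemma change of variable}), while for (i) you appeal to the argument of \cite[Prop.~3.14]{LaRo04} with the heat-kernel gradient estimate replacing their inequality (3.29), together with \cite[Thm.~5.3]{DaLu23} and \cite[Lem.~7.1]{LaLu17}. The only nuance worth noting is that the actual mechanism in \cite[Prop.~3.14]{LaRo04} is not a direct difference-quotient expansion on $\partial\Omega^i$ but rather an extension of $\phi$ to a tubular neighbourhood so that one can differentiate under the integral sign at interior points and then pass to the boundary by continuity (Vitali-type argument via \cite[Lem.~7.1]{LaLu17}); your description of ``forming the difference quotient'' is a slight shorthand for this, but the tools you cite are exactly the right ones.
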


\begin{remark}
The formulas for the differentials in Proposition~\ref{prop:diff} involve the differential of $\tilde{\sigma}_n[\cdot]$. While we do not make use of an explicit expression for this differential, we note here that it can, in fact, be computed. For the special case where $\Omega^i$ is a ball, we refer the reader to Lanza de Cristoforis and Rossi \cite[p.~173]{LaRo04}. For the general case, see Rossi \cite[p.~63]{Ro05}; see also Henrot and Pierre \cite[p.~197]{HePi18} and Costabel and Le Lou\"er \cite[Lemma 4.2]{CoLe12a}.

We also note that in Proposition \ref{prop:diff}, we restricted our attention to the first differentials of $V_1$, $V_2$, and $V_3$, but differentials of arbitrary order can, in fact, be computed. As before, for $V_2$ and $V_3$, this follows from standard calculus in Banach spaces, as these maps involve non-singular integral operators. For $V_1$, higher-order differentials can be obtained  following the argument used by Lanza de Cristoforis and Rossi in \cite[Prop.~3.14]{LaRo04}, and using the inequality 
\[
\left|D_x^\eta \partial_t^hS_n(t,x)\right| \leq K_{\eta,h} t^{-\frac{n}{2}-\frac{|\eta|_1}{2}-h}e^{-\frac{|x|^2}{8t}}\quad\text{for all }(t,x)\in \mathopen]0,+\infty\mathclose [ \times \mathbb{R}^{n}\,,
\]
which holds for some $K_{\eta,h} >0$ depending on $\eta \in \mathbb{N}^n$ and $h\in\mathbb{N}$ (see Ladyzhenskaja, Solonnikov and Ural'ceva \cite[p. 274]{LaSoUr68}). 
\end{remark}

 We recall that we are interested in characterizing the differential {of the normal derivative of the solution} with respect to the variable $\phi$. As a preliminary step, we need to characterize the differential $(d\mu^o[\phi],d\mu^i[\phi])$ of the map
\[
\phi \mapsto (\mu^o[\phi],\mu^i[\phi])\, .
\]
 {The strategy is to differentiate  { identity
 \eqref{M=0}, that is }
\[
\mathcal{M}[\phi](\mu^o[\phi],\mu^i[\phi])=(g^o,g^i)\, ,
\]
with respect to $\phi$, in order to obtain a characterization of the differentials
\[
(d\mu^o[\phi],d\mu^i[\phi])\, .
\]}

\begin{proposition}\label{prop:diffm}   
Let $\alpha \in \mathopen]0,1[$ and $T>0$. Let $\Omega^{o}$, $\Omega^{i}$ be as in \eqref{introsetconditions}.  Let $(g^o,g^i)$ be as in \eqref{introfunconditions}. Let $\phi_0\in \mathcal{A}^{\Omega^{o}}_{\partial\Omega^{i}}$. Let $h\in C^{1,\alpha}(\partial \Omega^{i},\mathbb{R}^n)$. Then $(d\mu^o[\phi_0][h],d\mu^i[\phi_0][h])$ is the unique pair in $\mathcal{X}^{\frac{\alpha}{2};  \alpha}$ such that
\begin{equation}\label{eq:diffm}   
\mathcal{M}[\phi_0](d\mu^o[\phi_0]{[h]},d\mu^i[\phi_0]{[h]})=(\tilde{g}^o,\tilde{g}^i)\, ,
\end{equation}
where
\[
\begin{split}
\tilde{g}^o(t,x):=&-\int_0^t\int_{\partial \Omega^{i}}S_n(t-\tau,x-\phi_0(s))d\tilde{\sigma}_n[\phi_0][h](s)\mu^i[\phi_0](\tau,s)\,d\sigma_s\, d\tau\\
&+\int_0^t\int_{\partial \Omega^{i}} \nabla S_n(t-\tau,x-\phi_0(s)) \cdot h(s)\tilde{\sigma}_n[\phi_0](s)\mu^i[\phi_0](s)(\tau,s)\,d\sigma_s\, d\tau
\end{split}
\]
for all $(t,x)\in [0,T]\times \partial \Omega^{o}$ and
\[
\begin{split}
\tilde{g}^i(t,\xi):=&-\int_0^t\int_{\partial \Omega^{o}}\nabla S_n(t-\tau,\phi_0(\xi)-y) \cdot h(\xi)\mu^o[\phi_0](\tau,y)\,d\sigma_y\, d\tau\\
&-\int_0^t\int_{\partial \Omega^{i}}S_n(t-\tau,\phi_0(\xi)-\phi_0(s))d \tilde{\sigma}_n[\phi_0][h](s)\mu^i[\phi_0](\tau,s)\,d\sigma_s\, d\tau\\
&-\int_0^t\int_{\partial \Omega^{i}}\nabla  S_n(t-\tau,\phi_0(\xi)-\phi_0(s)) \cdot (h(\xi)-h(s)) \tilde{\sigma}_n[\phi_0](s)\mu^i[\phi_0](\tau,s)\,d\sigma_s\, d\tau\, 
\end{split}
\]
for all $(t,\xi)\in [0,T]\times \partial \Omega^{i}$.
\end{proposition}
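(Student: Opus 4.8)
The plan is to differentiate the defining identity \eqref{M=0}, namely $\mathcal{M}[\phi](\mu^o[\phi],\mu^i[\phi])=(g^o,g^i)$, with respect to $\phi$ at $\phi_0$ in the direction $h$. Since the right-hand side $(g^o,g^i)$ is independent of $\phi$, and since both $\phi\mapsto\mathcal{M}[\phi]$ (by Proposition \ref{prop Mrealanal}) and $\phi\mapsto(\mu^o[\phi],\mu^i[\phi])$ (by Theorem \ref{Lambda Thm}, with $(g^o,g^i)$ fixed) are of class $C^\infty$, I would view the left-hand side as the composition of the smooth map $\phi\mapsto(\phi,\mu^o[\phi],\mu^i[\phi])$ with the evaluation map $(\phi,\mu^o,\mu^i)\mapsto\mathcal{M}[\phi](\mu^o,\mu^i)$, which is smooth because $\mathcal{M}[\phi]$ acts linearly on $(\mu^o,\mu^i)$ and the pairing of a continuous linear operator with an element is a continuous bilinear map. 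Applying the chain rule together with the product rule for this bilinear pairing, and using that the differential of a constant map vanishes, I obtain
\[
\partial_\phi\mathcal{M}[\phi_0][h](\mu^o[\phi_0],\mu^i[\phi_0])+\mathcal{M}[\phi_0]\bigl(d\mu^o[\phi_0][h],d\mu^i[\phi_0][h]\bigr)=0\, ,
\]
so that $(d\mu^o[\phi_0][h],d\mu^i[\phi_0][h])$ solves \eqref{eq:diffm} with $(\tilde g^o,\tilde g^i)=-\partial_\phi\mathcal{M}[\phi_0][h](\mu^o[\phi_0],\mu^i[\phi_0])$.

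It then remains to make this right-hand side explicit. Here I would use the decomposition of $\mathcal{M}=(\mathcal{M}_1,\mathcal{M}_2)$ from \eqref{M} in terms of the maps $V_1,V_2,V_3$ of \eqref{eq:V1}--\eqref{eq:V3}: evaluating at the fixed densities $(\mu^o[\phi_0],\mu^i[\phi_0])$, one reads off
\[
\mathcal{M}_1[\phi](\mu^o[\phi_0],\mu^i[\phi_0])=V_{\partial\Omega^{o}}[\mu^o[\phi_0]]+V_2[\phi,\mu^i[\phi_0]]\, ,
\]
\[
\mathcal{M}_2[\phi](\mu^o[\phi_0],\mu^i[\phi_0])=V_3[\phi,\mu^o[\phi_0]]+V_1[\phi,\mu^i[\phi_0]]\, .
\]
The crucial observation is that the term $V_{\partial\Omega^{o}}[\mu^o[\phi_0]]$ does not depend on $\phi$ (the outer boundary is fixed and the density is held constant during this partial differentiation), hence it contributes nothing to $\partial_\phi\mathcal{M}$. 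Differentiating the remaining terms and inserting the formulas \eqref{eq:diffV1}, \eqref{eq:diffV2}, \eqref{eq:diffV3} of Proposition \ref{prop:diff} (with $\mu_0=\mu^i[\phi_0]$ for $V_1,V_2$ and $\mu_0=\mu^o[\phi_0]$ for $V_3$), I arrive at
\[
\tilde g^o=-\partial_\phi V_2[\phi_0,\mu^i[\phi_0]][h]\, ,\qquad \tilde g^i=-\partial_\phi V_1[\phi_0,\mu^i[\phi_0]][h]-\partial_\phi V_3[\phi_0,\mu^o[\phi_0]][h]\, ,
\]
which, after substituting the explicit integral expressions and tracking the signs, coincide with the stated $(\tilde g^o,\tilde g^i)$.

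Finally, uniqueness is immediate: since $\mathcal{M}[\phi_0]$ is a linear isomorphism of $\mathcal{X}_0^{\frac{\alpha}{2};\alpha}$ onto $\mathcal{X}_0^{\frac{1+\alpha}{2};1+\alpha}$ (Proposition \ref{prop M=0}), equation \eqref{eq:diffm} admits exactly one solution, and the pair $(d\mu^o[\phi_0][h],d\mu^i[\phi_0][h])$ constructed above is that solution.

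I do not expect a genuine analytic obstacle in this argument, because all the delicate estimates---in particular those controlling the singular kernel $\nabla_x S_n$ near the diagonal---have already been absorbed into Proposition \ref{prop:diff}. The only points requiring care are the correct application of the chain and product rules for the bilinear evaluation pairing (which relies on the established $C^\infty$-dependence from Proposition \ref{prop Mrealanal} and Theorem \ref{Lambda Thm}) and the bookkeeping of signs and of the matched-pair structure: the difference $h(\xi)-h(s)$ produced by $V_1$ in the $\tilde g^i$ component, coming from differentiating $\phi$ both in the evaluation point $\phi(\xi)$ and in the integration variable $\phi(s)$, must be kept distinct from the single increment $h(s)$ arising in the $V_2$ contribution to $\tilde g^o$, where the evaluation point $x\in\partial\Omega^{o}$ is fixed.
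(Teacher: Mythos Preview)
Your proposal is correct and follows essentially the same approach as the paper's own proof: differentiate the identity $\mathcal{M}[\phi](\mu^o[\phi],\mu^i[\phi])=(g^o,g^i)$ at $\phi_0$, decompose $\mathcal{M}$ in terms of $V_1,V_2,V_3$ (noting that $V_{\partial\Omega^{o}}[\mu^o]$ is $\phi$-independent), and read off $(\tilde g^o,\tilde g^i)$ from the formulas of Proposition~\ref{prop:diff}, with uniqueness coming from the invertibility of $\mathcal{M}[\phi_0]$. Your explicit justification of the chain/product rule via the bilinear evaluation pairing is a welcome elaboration of what the paper leaves implicit.
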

\begin{proof}
By Proposition \ref{prop:diff} and by computing the differential with respect to $\phi$ at $\phi_0$ of equality
\[
\mathcal{M}[\phi](\mu^o[\phi],\mu^i[\phi])=(g^o,g^i)\, ,
\]
we obtain 
\[
\left\{
\begin{array}{ll}
 V_{\partial\Omega^{o}}  [d\mu^o[\phi_0][h]] + V_2[\phi_0,d\mu^i[\phi_0][h]]=-\partial_\phi V_2[\phi_0,\mu^i[\phi_0]][h] \, , &
\\
V_3[\phi_0,d\mu^o[\phi_0][h]]+V_1[\phi_0,d\mu^i[\phi_0][h]]=- \partial_\phi V_3[\phi_0,\mu^o[\phi_0]][h]- \partial_\phi V_1[\phi_0,\mu^i[\phi_0]][h]\, , &
\end{array}
\right.
\]
for all $h\in C^{1,\alpha}(\partial \Omega^{i},\mathbb{R}^n)$. Then, by formulas \eqref{eq:diffV1}--\eqref{eq:diffV3} and by the definition of $\tilde{g}^o$ and $\tilde{g}^i$, we deduce the validity of equation \eqref{eq:diffm}.
\end{proof}

\begin{remark}
Let $\alpha \in \mathopen]0,1[$ and $T>0$. Let $\Omega^{o}$, $\Omega^{i}$ be as in \eqref{introsetconditions}.  Let $(g^o,g^i)$ be as in \eqref{introfunconditions}. Let $\phi_0\in \mathcal{A}^{\Omega^{o}}_{\partial\Omega^{i}}$. Let $h\in C^{1,\alpha}(\partial \Omega^{i},\mathbb{R}^n)$. Let $(\tilde{g}^o,\tilde{g}^i)$ be as in Proposition \ref{prop:diffm}.
Then 
\[
(d\mu^o[\phi_0][h]],d\mu^i[\phi_0][h]])=\mathcal{M}[\phi_0]^{(-1)}(\tilde{g}^o,\tilde{g}^i)\, .
\]
\end{remark}

We are now in the position to compute the differential of the map
\[
\phi \mapsto \frac{\partial}{\partial \nu_{\Omega^{o}}}u_{\phi}
\]
with respect to the shape parameter $\phi$.

\begin{theorem}\label{thm:diffnormal}
Let $\alpha \in \mathopen]0,1[$ and $T>0$. Let $\Omega^{o}$, $\Omega^{i}$ be as in \eqref{introsetconditions}.  Let $(g^o,g^i)$ be as in \eqref{introfunconditions}. Let $\phi_0\in \mathcal{A}^{\Omega^{o}}_{\partial\Omega^{i}}$. Let $h\in C^{1,\alpha}(\partial \Omega^{i},\mathbb{R}^n)$. Let $(\tilde{g}^o,\tilde{g}^i)$ be as in Proposition \ref{prop:diffm}. Let
\[
(\mu^o_0,\mu^i_0)=\mathcal{M}[\phi_0]^{(-1)}(g^o,g^i)\, , \qquad (\mu^o_1,\mu^i_1)=\mathcal{M}[\phi_0]^{(-1)}(\tilde{g}^o,\tilde{g}^i)\, .
\]
Then
\[
d_\phi \Big(\frac{\partial}{\partial \nu_{\Omega^{o}}}u_{\phi}\Big) _{|{\phi=\phi_0}} [h]=  \frac{\partial}{\partial \nu_{\Omega^o}}\tilde{u}_1 \qquad \text{on}\ [0,T]\times \partial \Omega^{o}\, ,
\]
where $\tilde{u}_1(t,x)$ is the unique continuous extension to $ [0,T]\times (\overline{\Omega^o}\setminus  \Omega^{i}[\phi_0])$ of
\[
\begin{split}
\tilde{u}_1(t,x):=&\,\,v^+_{\Omega^{o}}[\mu^o_1](t,x)+v^-_{\Omega^{i}[\phi_0]}[\mu^i_1 \circ (\phi_0^T)^{(-1)}](t,x)\\
&+\int_0^t\int_{\partial \Omega^{i}}S_n(t-\tau,x-\phi_0(s))d\tilde{\sigma}_n[\phi_0][h](s)\mu^i_0(\tau,s)\,d\sigma_s\, d\tau\\
&-\int_0^t\int_{\partial \Omega^{i}} \nabla S_n(t-\tau,x-\phi_0(s))\cdot h(s)\tilde{\sigma}_n[\phi_0](s)\mu^i_0(\tau,s)\,d\sigma_s\, d\tau\\
& \qquad \qquad\qquad \qquad \qquad \qquad \qquad \forall (t,x)\in [0,T]\times (\overline{\Omega^o}\setminus \overline {\Omega^{i}[\phi_0]})\, .
\end{split}
\]
\end{theorem}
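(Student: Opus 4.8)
The plan is to start from the explicit boundary-integral representation of $\frac{\partial}{\partial\nu_{\Omega^o}}u_\phi$ on $[0,T]\times\partial\Omega^o$ obtained in the proof of Theorem~\ref{thm:smoothrepbis}, namely
\[
\frac{\partial}{\partial\nu_{\Omega^o}}u_\phi(t,x)=\tfrac12\mu^o[\phi](t,x)+W^*_{\partial\Omega^o}[\mu^o[\phi]](t,x)+F[\phi](t,x),
\]
where $F[\phi](t,x):=\int_0^t\int_{\partial\Omega^i}\frac{\partial}{\partial\nu_{\Omega^o}(x)}S_n(t-\tau,x-\phi(y))\,\mu^i[\phi](\tau,y)\,\tilde\sigma_n[\phi](y)\,d\sigma_y\,d\tau$. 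By Theorem~\ref{thm:smoothrepbis} this map is of class $C^\infty$ in $\phi$, so the differential exists and may be obtained by differentiating the formula at $\phi_0$ in the direction $h$. I would then split the computation into the self-interaction part $\tfrac12\mu^o[\phi]+W^*_{\partial\Omega^o}[\mu^o[\phi]]$ and the cross part $F[\phi]$.

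For the self-interaction part, the operator $\tfrac12\mathrm{Id}+W^*_{\partial\Omega^o}$ is independent of $\phi$ and linear and continuous (Theorem~\ref{thmsl}), so its $\phi$-differential is simply $\tfrac12\mu^o_1+W^*_{\partial\Omega^o}[\mu^o_1]$ with $\mu^o_1=d\mu^o[\phi_0][h]$ as identified in Proposition~\ref{prop:diffm} and the Remark following it; crucially, here one never differentiates the singular kernel $S_n(t-\tau,x-y)$, $x,y\in\partial\Omega^o$, in $\phi$, since the entire $\phi$-dependence sits in the density. For the cross part $F[\phi]$, the kernel is non-singular because $x\in\partial\Omega^o$ and $\phi_0(y)\in\phi_0(\partial\Omega^i)$ stay at positive distance; hence I would differentiate under the integral sign (justified by the regularity results for time-dependent integral operators with non-singular kernels of \cite[Lemmas A.2, A.3]{DaLu23}) and collect the three contributions coming from the three $\phi$-dependencies: the kernel, through $d_\phi S_n(t-\tau,x-\phi(y))[h]=-\nabla S_n(t-\tau,x-\phi_0(y))\cdot h(y)$; the density, through $d\mu^i[\phi_0][h]=\mu^i_1$ (Proposition~\ref{prop:diffm}); and the Jacobian factor, through $d\tilde\sigma_n[\phi_0][h]$ (Lemma~\ref{lemma change of variable}).

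It then remains to recognize the resulting sum as $\frac{\partial}{\partial\nu_{\Omega^o}}\tilde u_1$. To this end I would compute the outer normal derivative of $\tilde u_1$ term by term: the jump relation of Theorem~\ref{thmsl}(iii) applied to the $+$ side gives $\frac{\partial}{\partial\nu_{\Omega^o}}v^+_{\Omega^o}[\mu^o_1]=\tfrac12\mu^o_1+W^*_{\partial\Omega^o}[\mu^o_1]$, matching the self-interaction differential; the term $v^-_{\Omega^i[\phi_0]}[\mu^i_1\circ(\phi_0^T)^{(-1)}]$ and the two correction integrals all have kernels that are non-singular near $\partial\Omega^o$, so their outer normal derivatives pass under the integral and reproduce, respectively, the density contribution, the $d\tilde\sigma_n$ contribution, and the kernel contribution of $d_\phi F[\phi_0][h]$. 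Finally I would record that these three integrals are continuous up to $\partial\Omega^o$ (again by separation of the boundaries) and that $v^+_{\Omega^o}[\mu^o_1]$ is continuous by Theorem~\ref{thmsl}, so that the continuous extension $\tilde u_1$ and its outer normal trace are well defined.

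I do not expect a single hard obstacle, but rather that the care lies in the bookkeeping and in the two points just highlighted: differentiation under the integral for the cross terms must be grounded in the non-singular-kernel estimates, and the self-interaction must be handled by first invoking the jump relation and only then differentiating the density, so as to avoid differentiating a singular kernel with respect to $\phi$. A conceptually clarifying observation, which I would state as a guiding remark, is that $\tilde u_1$ is nothing but the shape differential $d_\phi u_\phi|_{\phi=\phi_0}[h]$ of the solution itself at interior points---indeed the same three-term structure appears when one differentiates the interior representation $v^+_{\Omega^o}[\mu^o[\phi]]+v^-_{\Omega^i[\phi]}[\mu^i[\phi]\circ(\phi^T)^{(-1)}]$---so that the theorem may be read as the statement that the shape differential commutes with the outer normal trace.
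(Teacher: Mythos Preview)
Your proposal is correct and follows essentially the same route as the paper: the paper writes the normal derivative as $\tfrac12\mu^o[\phi]+W^*_{\partial\Omega^o}[\mu^o[\phi]]+\frac{\partial}{\partial\nu_{\Omega^o}}u^{\#}[\phi]$ with $u^{\#}[\phi]$ the cross single-layer term, and then simply invokes ``standard rules of differentiation'' to conclude. Your write-up spells out those standard rules in detail---linearity of $\tfrac12\mathrm{Id}+W^*_{\partial\Omega^o}$ for the self-interaction, differentiation under the integral sign for the non-singular cross term, and termwise identification with $\frac{\partial}{\partial\nu_{\Omega^o}}\tilde u_1$---which is exactly what the paper leaves implicit.
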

\begin{proof}
By Proposition \ref{prop M=0},  for every $\phi  \in \mathcal{A}^{\Omega^{o}}_{\partial\Omega^{i}}$ we have 
 \[
    \begin{split}
        u_{\phi}(t,x) 
        &= \int_{0}^{t} \int_{\partial \Omega^{o}} S_{n}(t-\tau, x-y) \mu^o[\phi](\tau, y) \,d\sigma_y d\tau 
        \\
       & \qquad  +\int_{0}^{t} \int_{\partial \Omega^{i}} S_{n}(t-\tau, x-\phi(y)) \mu^i[\phi](\tau, y) \tilde{\sigma}_n[\phi](y) \,d\sigma_y d\tau \, ,
    \end{split}
\]
    for all $(t,x) \in [0,T] \times (\overline{\Omega^{o}} \setminus \Omega^{i}[\phi])$. Then we note that
\[
    \begin{split}
       \frac{\partial}{\partial \nu_{\Omega^{o}}} u_{\phi}(t,x) 
        &=  \frac{1}{2}\mu^o[\phi](t,x) +W_{\partial\Omega^{o}}^* [\mu^o[\phi]](t,x) 
        \\
       & \qquad  +       \frac{\partial}{\partial \nu_{\Omega^{o}}}      u^{\#}[\phi](t,x) \, ,
    \end{split}
\]
    for all $(t,x) \in [0,T] \times \partial \Omega^{o}$, where
    \[
     u^{\#}[\phi](t,x):=\int_{0}^{t} \int_{\partial \Omega^{i}} S_{n}(t-\tau, x-\phi(y)) \mu^i[\phi](\tau, y) \tilde{\sigma}_n[\phi](y) \,d\sigma_y d\tau \, ,
    \]
    for all $(t,x) \in [0,T] \times (\overline{\Omega^{o}} \setminus \Omega^{i}[\phi])$. Then by standard rules of differentiation, we deduce the validity of the theorem.
\end{proof}

\begin{remark}
Under the assumptions of Theorem \ref{thm:diffnormal}, one verifies that $\tilde{u}_1$ is of class $C_{0}^{\frac{1+\alpha}{2}; 1+\alpha}$ close to $[0,T]\times \partial \Omega^o$ and thus it makes sense to consider the normal derivative $\frac{\partial}{\partial \nu_{\Omega^o}}\tilde{u}_1$ and that such normal derivative belongs to $C_{0}^{\frac{\alpha}{2}; \alpha}([0,T]\times \partial \Omega^o)$.
\end{remark}

Our approach is based  on potential theoretic methods,  { thus the formula for the shape differential} is given through integral equations and layer potential representations. Instead, in Chapko, Kress, and Yoon \cite{ChKrYo98}  the authors differentiate the weak formulations of the initial-boundary value problem, and thus obtain a description of the shape differential via a boundary value problem. Inspired by their result, we also have the following corollary. 

\begin{corollary}\label{cor:der}
Let $\alpha \in \mathopen]0,1[$ and $T>0$. Let $\Omega^{o}$, $\Omega^{i}$ be as in \eqref{introsetconditions}.  {Let $(g^o,g^i)$ be as in \eqref{introfunconditions}}. Let $\phi_0\in \mathcal{A}^{\Omega^{o}}_{\partial\Omega^{i}}$. Let $h\in C^{1,\alpha}(\partial \Omega^{i},\mathbb{R}^n)$. 
Then
\[
d_\phi \Big(\frac{\partial}{\partial \nu_{\Omega^{o}}}u_{\phi}\Big)_{|{\phi=\phi_0}}  [h]=  \frac{\partial}{\partial \nu_{\Omega^o}}\tilde{u}_1 \qquad \text{on}\ [0,T]\times \partial \Omega^{o}\, ,
\]
where $\tilde u_1$ is the unique solution to problem 
\begin{equation}\label{shapesys}
\begin{cases}
    \partial_t u - \Delta u = 0 & \quad\text{ in } ]0,T] \times (\Omega^{o} \setminus \overline{\Omega^{i}[\phi_0]}), 
    \\
u = 0& \quad \mbox{ on }  [0,T] \times \partial \Omega^{o}, 
    \\
    u  = -\left(h \circ {\phi_0}^{(-1)}\right )\cdot  \nabla  u_{\phi_0}   & \quad {\mbox{ on } }  [0,T] \times \partial \Omega^{i}[\phi_0],
    \\
    u(0,\cdot)=0 & \quad \text{ in } \overline{\Omega^{o}} \setminus \Omega^{i}[\phi_0]\, .
    \end{cases}
\end{equation}
\end{corollary}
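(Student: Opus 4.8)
The plan is to identify the explicit function $\tilde{u}_1$ produced in Theorem~\ref{thm:diffnormal}---there written as a combination of the two single layer potentials $v^+_{\Omega^o}[\mu^o_1]$, $v^-_{\Omega^i[\phi_0]}[\mu^i_1\circ(\phi_0^T)^{(-1)}]$ plus two correction integrals carrying $S_n$ and $\nabla_x S_n$---with the unique solution of the boundary value problem \eqref{shapesys}. Once this identification is secured, the desired formula $d_\phi(\frac{\partial}{\partial\nu_{\Omega^o}}u_\phi)_{|\phi=\phi_0}[h]=\frac{\partial}{\partial\nu_{\Omega^o}}\tilde{u}_1$ is already contained in Theorem~\ref{thm:diffnormal}, and the corollary follows. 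Concretely, I would verify one by one that the explicit $\tilde{u}_1$ satisfies the four conditions in \eqref{shapesys}, and then close the argument by a uniqueness statement.

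The heat equation and the initial condition form the easy part. Every term defining $\tilde{u}_1$ is a time-causal convolution against $S_n$ or $\nabla_x S_n$ with sources carried by $\partial\Omega^o$ or $\partial\Omega^i[\phi_0]$; for $x$ in the open annular region these are smooth in $(t,x)$ and solve $\partial_t-\Delta=0$ by Theorem~\ref{thmsl}(i) (differentiating under the integral sign, since $S_n$ and hence $\nabla_x S_n$ solve the heat equation off the diagonal), while causality of $S_n$ together with the membership of all densities in the $C_0$ spaces gives $\tilde{u}_1(0,\cdot)=0$. The outer condition $\tilde{u}_1=0$ on $[0,T]\times\partial\Omega^o$ is also direct and needs no jump relation, since $\partial\Omega^o$ is disjoint from $\partial\Omega^i[\phi_0]$: restricting the two single layer terms to $\partial\Omega^o$ reproduces $\mathcal{M}_1[\phi_0](\mu^o_1,\mu^i_1)=\tilde{g}^o$, whereas the two correction integrals restricted to $\partial\Omega^o$ are, by the very formula for $\tilde{g}^o$ in Proposition~\ref{prop:diffm} (using $\mu^i_0=\mu^i[\phi_0]$), exactly $-\tilde{g}^o$; the two cancel.

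The inner condition on $\partial\Omega^i[\phi_0]$ is the heart of the matter and the step I expect to cost the most work. I would take the trace from the annular (exterior) side at a point $x=\phi_0(\xi)$, $\xi\in\partial\Omega^i$. The two single layer terms, by continuity of the single layer trace, reproduce $\mathcal{M}_2[\phi_0](\mu^o_1,\mu^i_1)=\tilde{g}^i$, and the correction integral carrying $S_n$ is continuous and cancels the middle term of $\tilde{g}^i$. The delicate term is the one carrying $\nabla_x S_n$: it is the gradient of a single layer on $\partial\Omega^i[\phi_0]$ tested against the ($s$-dependent) vector field $h$, so its exterior trace must be computed componentwise through the gradient jump relation of Theorem~\ref{thmsl}(iii), producing a jump contribution $\frac{1}{2}\,h(\xi)\cdot\nu_{\Omega^i[\phi_0]}(\phi_0(\xi))\,\mu^i_0(t,\xi)$ together with a principal value integral. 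I would then merge this integral with the third term of $\tilde{g}^i$ via the elementary identity $(h(\xi)-h(s))+h(s)=h(\xi)$, collapsing the two $\nabla_x S_n$ integrals over $\partial\Omega^i$ into a single one with the constant-in-$s$ factor $h(\xi)$. Finally I would compute $\nabla u_{\phi_0}$ on $\partial\Omega^i[\phi_0]$ directly from the layer potential representation of $u_{\phi_0}$ (again by Theorem~\ref{thmsl}(iii), which contributes the same jump term), and match term by term: the $\partial\Omega^o$ gradient integral, the $\frac{1}{2}h\cdot\nu$ jump, and the $\partial\Omega^i$ gradient integral, obtaining exactly $-h(\xi)\cdot\nabla u_{\phi_0}(t,\phi_0(\xi))$, i.e. $-(h\circ\phi_0^{(-1)})\cdot\nabla u_{\phi_0}$ in the variables of \eqref{shapesys}. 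This bookkeeping, forced by the single layer gradient jump, is the main obstacle.

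To conclude I would invoke uniqueness for \eqref{shapesys}: the difference of any two bounded solutions, continuous up to the boundary, solves the heat equation with vanishing Dirichlet datum on both boundary components and vanishing initial datum, hence is identically zero by the parabolic maximum principle. Since the explicit $\tilde{u}_1$ of Theorem~\ref{thm:diffnormal} is such a solution---its inner boundary datum $-(h\circ\phi_0^{(-1)})\cdot\nabla u_{\phi_0}$ lying in $C_0^{\frac{\alpha}{2};\alpha}$ because $u_{\phi_0}\in C_0^{\frac{1+\alpha}{2};1+\alpha}$ forces $\nabla u_{\phi_0}\in C_0^{\frac{\alpha}{2};\alpha}$ by Theorem~\ref{thmsl}(ii), and it is continuous up to $\partial\Omega^i[\phi_0]$ by the continuous-extension statement of Theorem~\ref{thm:diffnormal}---it is the unique solution of \eqref{shapesys}, and the corollary follows from Theorem~\ref{thm:diffnormal}.
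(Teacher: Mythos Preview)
Your proposal is correct and follows essentially the same route as the paper's proof: both verify that the explicit $\tilde u_1$ of Theorem~\ref{thm:diffnormal} satisfies the four conditions of \eqref{shapesys}, with the heat equation and initial condition being immediate, the outer trace reducing to $\mathcal{M}_1[\phi_0](\mu^o_1,\mu^i_1)-\tilde g^o=0$, and the inner trace computed via the gradient jump formula of Theorem~\ref{thmsl}(iii) combined with $\mathcal{M}_2[\phi_0](\mu^o_1,\mu^i_1)=\tilde g^i$ and the identity $(h(\xi)-h(s))+h(s)=h(\xi)$. One small terminological slip: the boundary integrals arising here are ordinary (not principal value) integrals, since the kernel $\nabla_x S_n$ is integrable over $[0,t]\times\partial\Omega^i[\phi_0]$.
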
 
\begin{proof}
{ First we observe that, since  $-\left(h \circ \phi_0^{(-1)}\right )\cdot  \nabla  u_{\phi_0}   \in C_0^{\frac{\alpha}{2};  \alpha}([0,T] \times \partial\Omega^{i}[\phi_0])$, problem \eqref{shapesys} admits a unique solution in $ C_{0}^{\frac{\alpha}{2}; \alpha}([0,T] \times (\overline{\Omega^{o}} \setminus \Omega^{i}[\phi]))$. }
 Since $\tilde u_1${---as defined in Theorem \ref{thm:diffnormal}---}clearly solves the heat equation and have zero initial condition, {in order to check that it coincides with the unique solution to problem \eqref{shapesys}} we only need to compute which Dirichlet boundary conditions  {it satisfies}. {As a consequence, we need to characterize the trace of $\tilde{u}_1$ on $[0,T] \times \partial (\Omega^{o} \setminus \overline{\Omega^{i}[\phi_0]})$.} We start by computing the value of $\tilde u_1$ on $[0,T]\times\partial\Omega^o$: Let $(t,x) \in [0,T]\times\partial\Omega^o$, then by the definition of $\mathcal{M}$, the equation \eqref{eq:diffm} defining the shape differentials of $(\mu^o[\phi],\mu^i[\phi])$, and the definition of $\tilde u_1$ we get
\[
 \tilde u_1(t,x) = 0\,,
\]
as expected. Next we consider the value of $\tilde u_1$ on $[0,T]\times\partial\Omega^i[\phi_0]$. Let $(t,\xi) \in [0,T]\times\partial\Omega^i$,  then by arguing again as above and also using the jump formula for the gradient of the single layer potential (see Theorem \ref{thmsl} (iii))   we get  
\begin{align*}
 \tilde u_1(t,\phi_0(\xi))  =\,\,& v^+_{\Omega^{o}}[\mu^o_1](t,\phi_0(\xi))+V_{\partial\Omega^{i}[\phi_0]}[\mu^i_1 \circ (\phi_0^T)^{(-1)}](t,\phi(\xi))\\
 &+\int_0^t\int_{\partial \Omega^{i}}S_n(t-\tau,\phi_0(\xi)-\phi_0(s))d\tilde{\sigma}_n[\phi_0][h](s)\mu^i_0(\tau,s)\,d\sigma_s\, d\tau\\
&-\int_0^t\int_{\partial \Omega^{i}}  \nabla S_n(t-\tau,\phi_0(\xi)-\phi_0(s))\cdot h(s)\tilde{\sigma}_n[\phi_0](s)\mu^i_0(\tau,s)\,d\sigma_s\, d\tau\\
 &+\frac12 h(\xi) \cdot \nu_{\Omega^i[\phi_0]}(\phi_0(\xi))\mu_0^i(t,\xi)\\
= \,\,& \mathcal{M}_2[\phi_0](\mu^o_1,\mu^i_1)(t,\xi)\\
&+\int_0^t\int_{\partial \Omega^{i}}S_n(t-\tau,\phi_0(\xi)-\phi_0(s))d\tilde{\sigma}_n[\phi_0][h](s)\mu^i_0(\tau,s)\,d\sigma_s\, d\tau\\
&-\int_0^t\int_{\partial \Omega^{i}}  \nabla S_n(t-\tau,\phi_0(\xi)-\phi_0(s))\cdot h(s)\tilde{\sigma}_n[\phi_0](s)\mu^i_0(\tau,s)\,d\sigma_s\, d\tau\\
 &+\frac12 h(\xi) \cdot \nu_{\Omega^i[\phi_0]}(\phi_0(\xi))\mu_0^i(t,\xi)\\
= \,\,&-\int_0^t\int_{\partial \Omega^{o}}\nabla S_n(t-\tau,\phi_0(\xi)-y)\cdot h(\xi)\mu^o_0(\tau,y)\,d\sigma_y\, d\tau\\
&-\int_0^t\int_{\partial \Omega^{i}}S_n(t-\tau,\phi_0(\xi)-\phi_0(s))d \tilde{\sigma}_n[\phi_0][h](s)\mu^i_0(\tau,s)\,d\sigma_s\, d\tau\\
&-\int_0^t\int_{\partial \Omega^{i}}\nabla  S_n(t-\tau,\phi_0(\xi)-\phi_0(s))\cdot( h(\xi)-h(s)) \tilde{\sigma}_n[\phi_0](s)\mu^i_0(\tau,s)\,d\sigma_s\, d\tau\\
&+\int_0^t\int_{\partial \Omega^{i}}S_n(t-\tau,\phi_0(\xi)-\phi_0(s))d\tilde{\sigma}_n[\phi_0][h](s)\mu^i_0(\tau,s)\,d\sigma_s\, d\tau\\
&-\int_0^t\int_{\partial \Omega^{i}} \nabla S_n(t-\tau,\phi_0(\xi)-\phi_0(s))\cdot h(s) \tilde{\sigma}_n[\phi_0](s)\mu^i_0(\tau,s)\,d\sigma_s\, d\tau\\
 &+\frac12 h(\xi) \cdot \nu_{\Omega^i[\phi_0]}(\phi_0(\xi))\mu_0^i(t,\xi)\\
= \,\,&-\int_0^t\int_{\partial \Omega^{o}}\nabla S_n(t-\tau,\phi_0(\xi)-y)\cdot h(\xi)\mu^o_0(\tau,y)\,d\sigma_y\, d\tau\\
&-\int_0^t\int_{\partial \Omega^{i}}\nabla  S_n(t-\tau,\phi_0(\xi)-\phi_0(s))\cdot h(\xi) \tilde{\sigma}_n[\phi_0](s)\mu^i_0(\tau,s)\,d\sigma_s\, d\tau\\
 &+\frac12 h(\xi) \cdot \nu_{\Omega^i[\phi_0]}(\phi_0(\xi))\mu_0^i(t,\xi)\\
= \,\,&- h(\xi)\cdot\int_0^t\int_{\partial \Omega^{o}} \nabla S_n(t-\tau,\phi_0(\xi)-y) \mu^o_0(\tau,y)\,d\sigma_y\, d\tau\\
&- h(\xi)\cdot \int_0^t\int_{\partial \Omega^{i}}\nabla  S_n(t-\tau,\phi_0(\xi)-\phi_0(s)) \tilde{\sigma}_n[\phi_0](s)\mu^i_0(\tau,s)\,d\sigma_s\, d\tau\\
 &+\frac12 h(\xi) \cdot \nu_{\Omega^i[\phi_0]}(\phi_0(\xi))\mu_0^i(t,\xi).
\end{align*}
By the jump formula for the gradient of the single layer potential and by changing the variable in the second integral we get
\begin{align*}
 \tilde u_1(t,\phi_0(\xi))  =\,\,& -h(\xi)\cdot  \nabla v^+_{\Omega^o}[\mu^o[\phi_0]](t,\phi_0(\xi))\\
 & - \frac12 h(\xi) \cdot \nu_{\Omega^i[\phi_0]}(\phi_0(\xi))\mu_0^i(t,\xi)- h(\xi) \cdot \nabla  v^-_{\Omega^i[\phi_0]}[\mu^i[\phi_0]\circ (\phi_0^T)^{(-1)}](t,\phi_0(\xi))\\
  &+\frac12 h(\xi) \cdot \nu_{\Omega^i[\phi_0]}(\phi_0(\xi))\mu_0^i(t,\xi)\\
  =\,\,&-h(\xi) \cdot \nabla u_{\phi_0}(t,\phi_0(\xi)),
\end{align*}
that completes the proof.
\end{proof}

\begin{remark}
Under the assumptions of Corollary \ref{cor:der}, one verifies that the solution of problem \eqref{shapesys} is regular enough to consider the normal derivative on $[0,T]\times \partial \Omega^o$.
\end{remark}

\subsection{Comparison with Chapko, Kress{,} and Yoon  \cite{ChKrYo98}  }\label{ss:comparison}

In Chapko, Kress{,} and Yoon  \cite{ChKrYo98} the authors consider  problem \eqref{princeqpertu} with $g^i=0$, that is:
\begin{equation}\label{bvp:inv}
\begin{cases}
    \partial_t u - \Delta u = 0 & \quad\text{in } ]0,T] \times (\Omega^{o} \setminus \overline{\Omega^{i}[\phi]}), 
    \\
u(t,x) = g^o (t,x)& \quad \forall (t,x)\in [0,T] \times \partial \Omega^{o}, 
    \\
    u (t,x) = 0 & \quad \forall (t,x)\in  [0,T] \times \partial \Omega^{i}[\phi],
    \\
    u(0,\cdot)=0 & \quad \text{in } \overline{\Omega^{o}} \setminus \Omega^{i}[\phi]\, .
    \end{cases}
\end{equation}

Obviously, our Theorem \ref{thm:smoothrepbis} can be specialized to the case of problem \eqref{bvp:inv} and gives the smoothness of the map $(\phi,g^o) \mapsto \frac{\partial}{\partial \Omega^o}u_{\phi,g^o,0}$.  

Also, since the inner boundary condition on $ [0,T] \times \partial \Omega^{i}[\phi]$ is zero, {the solution $u_{\phi,g^o,0}$ is constant on $ [0,T] \times \partial \Omega^{i}[\phi]$ and accordingly} the tangential derivative on $\partial \Omega^{i}[\phi]$ of the solution to problem \eqref{bvp:inv} vanishes on $ [0,T] \times \partial \Omega^{i}[\phi]$. This consideration immediately implies that Corollary  \ref{cor:der} when applied to problem \eqref{bvp:inv} coincides with 
   \cite[Thm. 2.1]{ChKrYo98}. Indeed their result, stated with our notation and functional setting (they assume $\Omega$ to be a { subset} of class $C^2$ in {$\mathbb{R}^2$} and set the problem   in Sobolev spaces), reads:
\begin{theorem}[Chapko, Kress{,} and Yoon {\cite[Thm. 2.1]{ChKrYo98}}]
Let $\alpha \in \mathopen]0,1[$ and $T>0$. Let $\Omega^{o}$, $\Omega^{i}$ be as in \eqref{introsetconditions}.  Let $(g^o,g^i)$ be as in \eqref{introfunconditions} and $g^i=0$. Let $\phi_0\in \mathcal{A}^{\Omega^{o}}_{\partial\Omega^{i}}$. Let $h\in C^{1,\alpha}(\partial \Omega^{i},\mathbb{R}^n)$. 
Then
\[
d_\phi \Big(\frac{\partial}{\partial \nu_{\Omega^{o}}}u_{\phi}\Big)_{|{\phi=\phi_0}}  [h]=  \frac{\partial}{\partial \nu_{\Omega^o}}\tilde{u}_1 \qquad \text{on}\ [0,T]\times \partial \Omega^{o}\, ,
\]
where $\tilde u_1$ is the unique solution to problem
\[
\begin{cases}
    \partial_t u - \Delta u = 0 & \quad\text{ in } ]0,T] \times (\Omega^{o} \setminus \overline{\Omega^{i}[\phi_0]}), 
    \\
u = 0& \quad \mbox{ on }  [0,T] \times \partial \Omega^{o}, 
    \\
    u  = -\left(h \circ {\phi_0^{(-1)}}\right )\cdot \nu_{\Omega^i[\phi_0]}\frac{\partial}{\partial \nu_{\Omega^i[\phi_0]}}u_{\phi_0}   & \quad {\mbox{ on } }  [0,T] \times \partial \Omega^{i}[\phi_0],
    \\
    u(0,\cdot)=0 & \quad \text{ in } \overline{\Omega^{o}} \setminus \Omega^{i}[\phi_0]\, .
    \end{cases}
\]
\end{theorem}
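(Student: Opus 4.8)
The plan is to obtain this statement as a direct specialization of Corollary \ref{cor:der}, the only extra ingredient being an elementary observation about the gradient of $u_{\phi_0}$ on the inner boundary. First I would apply Corollary \ref{cor:der} to problem \eqref{bvp:inv}, which is precisely problem \eqref{princeqpertu} with $g^i=0$. This immediately yields
\[
d_\phi \Big(\frac{\partial}{\partial \nu_{\Omega^{o}}}u_{\phi}\Big)_{|{\phi=\phi_0}} [h]= \frac{\partial}{\partial \nu_{\Omega^o}}\tilde{u}_1 \qquad \text{on}\ [0,T]\times \partial \Omega^{o}\,,
\]
where $\tilde u_1$ is the unique solution of system \eqref{shapesys} whose inner Dirichlet datum is $-\left(h \circ \phi_0^{(-1)}\right)\cdot \nabla u_{\phi_0}$ on $[0,T]\times\partial\Omega^{i}[\phi_0]$. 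It therefore only remains to check that, in the present case $g^i=0$, this datum coincides with the one in the statement.

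The key step is to decompose the spatial gradient of $u_{\phi_0}$ on the inner boundary into its tangential and normal parts,
\[
\nabla u_{\phi_0}= \nabla_{\partial\Omega^{i}[\phi_0]} u_{\phi_0} + \Big(\frac{\partial}{\partial\nu_{\Omega^{i}[\phi_0]}}u_{\phi_0}\Big)\nu_{\Omega^{i}[\phi_0]} \qquad \text{on}\ [0,T]\times\partial\Omega^{i}[\phi_0]\,,
\]
where $\nabla_{\partial\Omega^{i}[\phi_0]}$ denotes the tangential (surface) gradient. Since here $g^i=0$, the inner Dirichlet condition in \eqref{bvp:inv} forces $u_{\phi_0}(t,\cdot)\equiv 0$ on $\partial\Omega^{i}[\phi_0]$ for every $t\in[0,T]$; hence, for each fixed $t$, the function $u_{\phi_0}(t,\cdot)$ is constant along the inner boundary and its tangential gradient vanishes there. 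Substituting $\nabla u_{\phi_0}=\big(\partial u_{\phi_0}/\partial\nu_{\Omega^{i}[\phi_0]}\big)\nu_{\Omega^{i}[\phi_0]}$ into the datum from Corollary \ref{cor:der} gives
\[
-\left(h \circ \phi_0^{(-1)}\right)\cdot \nabla u_{\phi_0}= -\left(h \circ \phi_0^{(-1)}\right)\cdot \nu_{\Omega^{i}[\phi_0]}\,\frac{\partial}{\partial \nu_{\Omega^{i}[\phi_0]}}u_{\phi_0} \qquad \text{on}\ [0,T]\times\partial\Omega^{i}[\phi_0]\,,
\]
which is exactly the inner boundary condition in the claimed problem.

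Having matched all four conditions of the two boundary value problems---the heat equation in the interior, the homogeneous datum on the outer boundary, the (now identical) datum on the inner boundary, and the zero initial condition---the uniqueness already established in the proof of Corollary \ref{cor:der} forces $\tilde u_1$ to coincide with the function in the statement, and the displayed identity follows. I do not expect any genuine obstacle here: the entire analytic content is carried by Corollary \ref{cor:der}, and the remaining work is the tangential/normal splitting above. The one point deserving a word of care is the justification of the surface-gradient decomposition at the $C^{1,\alpha}$ boundary regularity, but since $u_{\phi_0}$ is smooth in the open interior and has enough boundary regularity for its full gradient and normal derivative to be well defined on $[0,T]\times\partial\Omega^{i}[\phi_0]$, this is routine.
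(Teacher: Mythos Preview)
Your proposal is correct and follows exactly the paper's own argument: the paper states that since $g^i=0$, the tangential derivative of $u_{\phi_0}$ on $\partial\Omega^{i}[\phi_0]$ vanishes, and hence Corollary~\ref{cor:der} specializes immediately to the Chapko--Kress--Yoon formulation. Your tangential/normal decomposition of $\nabla u_{\phi_0}$ is precisely this observation spelled out.
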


\section*{Acknowledgment}

The authors are members of the ``Gruppo Nazionale per l'Analisi Matematica, la Probabilit\`a e le loro Applicazioni'' (GNAMPA) of the ``Istituto Nazionale di Alta Matematica'' (INdAM).
The authors acknowledge the support  of the
project funded by the EuropeanUnion - NextGenerationEU under the National Recovery and
Resilience Plan (NRRP), Mission 4 Component 2 Investment 1.1 - Call PRIN 2022 No. 104 of
February 2, 2022 of Italian Ministry of University and Research; Project 2022SENJZ3 (subject area: PE - Physical Sciences and Engineering) ``Perturbation problems and asymptotics for elliptic differential equations: variational and potential theoretic methods''. {M.~Dalla Riva also acknowledges the support by MUR (Ministero dell'Universit\`a e della Ricerca) through the PNRR Project QUANTIP - Partenariato Esteso NQSTI - PE00000023 - Spoke 9 - CUP: E63C22002180006. P.~Musolino also
acknowledges the support from EU through the H2020-MSCA-RISE-2020 project EffectFact, Grant agreement ID: 101008140. } 
%
%
%
%


\begin{thebibliography}{11}





\bibitem{Ba97}
E.~A. Baderko, Parabolic problems and boundary integral equations, {\it Math. Methods Appl. Sci.} {\bf 20} (1997), no.~5, 449--459.




{\bibitem{Br89}
R.M.~Brown,  The method of layer potentials for the heat equation in Lipschitz cylinders, {\it Amer. J. Math.} {\bf 111} (1989), no.2, 339--379.

\bibitem{Br90}
R.M.~Brown,  The initial-Neumann problem for the heat equation in Lipschitz cylinders, {\it Trans. Amer. Math. Soc.} {\bf 320} (1990), no.1, 1--52.}

\bibitem{ChKrYo98}
R. Chapko, R. Kress, and J. R. Yoon, On the numerical solution of an inverse boundary value problem for the heat equation, {\it Inverse Problems} {\bf 14}  (1998),  853--867.

\bibitem{ChKrYo99}
R. Chapko, R. Kress, and J. R. Yoon, An inverse boundary value problem for the heat equation: the Neumann condition, {\it Inverse Problems} {\bf 15}  (1999), 1033--1046. 

\bibitem{Ch95} 
A.~Charalambopoulos, On the Fr\'echet differentiability of boundary integral operators in the inverse elastic scattering problem, {\it Inverse Problems} {\bf 11} (1995), 1137--1161.

 

%
%

{\bibitem{Co90}
M.~Costabel,  Boundary integral operators for the heat equation, {\it Integral Equations Operator Theory} {\bf 13} (1990), no.4, 498--552.}

\bibitem{CoLe12a} 
M.~Costabel and F.~Le Lou\"er, Shape derivatives of boundary integral operators in electromagnetic scattering. Part I: Shape differentiability of pseudo-homogeneous boundary integral operators, {\it Integral Equations {Operator} Theory} {\bf 72} (2012),  509--535.





%

%

{\bibitem{DaLa10}
M. Dalla Riva and M. Lanza de Cristoforis, A perturbation result for the layer potentials of general second order differential operators with constant coefficients, {\it J. Appl. Funct. Anal.}  {\bf 5} (2010), no. 1, 10--30.}

{\bibitem{DaLaMu21}
M. Dalla Riva, M. Lanza de Cristoforis, and P. Musolino, {\it Singularly Perturbed Boundary Value Problems: A Functional Analytic Approach}, Springer Nature, Cham, 2021.}

\bibitem{DaLu23}
M. Dalla Riva and P. Luzzini, Regularity of layer heat potentials upon perturbation of the space support in the optimal H\"older setting{,} {\it Differ. Integral Equ.} {\bf 36} (2023), no. 11-12, 971--1003.

\bibitem{DaLuMoMu24}
M. Dalla Riva, P. Luzzini, R.~Molinarolo, and P.~Musolino, 
Multi-parameter perturbations for the space-periodic heat equation, {\it Commun. Pure Appl. Anal.} {\bf 23} (2024), no.2, 144--164.


\bibitem{DaLuMoMu25}
M. Dalla Riva, P. Luzzini, R.~Molinarolo, and P.~Musolino, Shape perturbation of a nonlinear mixed problem for the heat equation,
{\it J. Evol. Equ.}   {\bf 25} (2025), no. 1, Paper No. 18.





%
%
%
%
%
%
\bibitem{De85}
K.~Deimling, {\it Nonlinear {F}unctional {A}nalysis}{,} Springer-Verlag, Berlin, 1985. 


 \bibitem{DoHe24}
J.~D\"olz and F.~Henr\'iquez, Parametric shape holomorphy of boundary integral operators with applications, {\it SIAM J. Math. Anal.} {\bf 56} (2024), no.~5, 6731--6767.




\bibitem{GiTr83}
D. Gilbarg {and} N.S. Trudinger,  {\it Elliptic partial
differential equations of second order,} Reprint of the 1998 edition. Classics in Mathematics. Springer-Verlag, Berlin, 2001. 

\bibitem{HaKr04}
H.~Haddar and R.~Kress, On the Fr\'echet derivative for obstacle scattering with an impedance boundary condition, {\it SIAM J. Appl. Math.} {\bf 65} (2004), no. 1,  194--208. 

\bibitem{HeSc21}
F.~Henr\'iquez and C.~Schwab, Shape holomorphy of the Calder\'on projector for the Laplacian in $\mathbb{R}^2$, {\it Integral Equations Operator Theory} {\bf 93} (2021), no. 4, Paper No. 43, 40 pp. 
 
{\bibitem{HePi18}
A.~Henrot and M.~Pierre, {\it Shape variation and optimization. A geometrical analysis},  EMS Tracts Math., 28 European Mathematical Society (EMS), Z\"urich, 2018. }

\bibitem{He95} 
F.~Hettlich, Fr\'echet derivatives in inverse obstacle scattering, {\it Inverse Problems} {\bf11} (1995), no. 2,  371--382. 

\bibitem{HeRu01}
F. Hettlich {and} W. Rundell, Identification of a discontinuous source in the heat equation, {\it Inverse Problems} {\bf 17} (2001), no.5, 1465--1482.

\bibitem{HiPh57}
E.~Hille and R.S.~Phillips, {\it Functional analysis and semi-groups}. American Mathematical Society Colloquium Publications, vol. 31, American Mathematical Society, Providence, R. I., 1957.


  
\bibitem{Ki93} 
A.~Kirsch, The domain derivative and two applications in inverse scattering theory, {\it Inverse Problems} {\bf 9} (1993), no. 1, 81--96.
   
   




\bibitem{LaSoUr68}
O. A.~Lady\v{z}enskaja,  V.A.~Solonnikov, and N.N.~Ural'ceva, {\it  Linear and quasilinear equations of parabolic type.} (Russian) Translated from the Russian by S. Smith. Translations of Mathematical Monographs, {\bf 23} American Mathematical Society, Providence, R.I. 1968.


\bibitem{La07}
M.~Lanza~de Cristoforis, Perturbation problems in potential theory, a functional analytic approach, {\it J. Appl. Funct. Anal.} { \bf 2} (2007), no. 3,  197--222.

 
\bibitem{LaLu17}
M. Lanza de Cristoforis {and} P.~Luzzini, Time dependent boundary norms for kernels and regularizing properties of the double-layer heat potential{,} {\it Eurasian Math. J.} {\bf 8} (2017), no. 1, 76--118.
  
\bibitem{LaLu19}
M. Lanza de Cristoforis {and} P.~Luzzini, Tangential derivatives and higher order regularizing properties of the double-layer heat potential, {{\it Analysis (Berlin)}}, {\bf 8} (2019), no. 4, 167--193.



\bibitem{LaRo04}
M.~Lanza {de}~Cristoforis and L.~Rossi, Real analytic dependence of simple and double-layer potentials upon perturbation of the support and of the density, {\it J. Integral Equations Appl.} {\bf 16} (2004), no. 2, 137--174.



%



%
%
%



%

{\bibitem{NoSo13}
A.A.~Novotny and J.~Soko\l owski,  {\it Topological derivatives in shape optimization}, Interaction of Mechanics and Mathematics, Springer, Heidelberg, 2013. }

{\bibitem{PiHeJe24}
J.~Pinto, F.~Henr\'iquez, and C.~Jerez-Hanckes, Shape holomorphy of boundary integral operators on multiple open arcs, {\it J. Fourier Anal. Appl.} {\bf 30} (2024), no.2, Paper No. 14, 51 pp.      }


\bibitem{Po94}
R.~Potthast, Fr\'echet differentiability of boundary integral operators in inverse acoustic scattering, {\it Inverse Problems} {\bf 10} (1994), no. 2, 431--447. 

\bibitem{Po96a} 
R.~Potthast, Fr\'echet differentiability of the solution to the acoustic Neumann scattering problem with respect to the domain, {\it J. Inverse Ill-Posed Probl.}  {\bf 4} (1996),  no. 1,  67--84.
%

%

\bibitem{Ro05}
L.~Rossi, {\it Domain perturbation for Helmholtz potentials}, Ph.D. Dissertation, University of Padova, 2005.


\bibitem{SoZo92}
J.~Soko{\l}{o}wski and J.-P. Zol\'esio, {\it Introduction to shape optimization. {S}hape sensitivity analysis}, Vol.~{\bf 16} of Springer Series in Computational Mathematics, Springer-Verlag, Berlin, 1992.

 
%
%


%
 




\end{thebibliography}
\end{document}